\renewcommand{\Box}{\framebox{\rule{0.3em}{0.0em}}}
\newtheorem{thm}{Theorem}[section]
\newtheorem{lema}{Lemma}[section]
\newtheorem{prop}{Proposition}[section]
\newtheorem{ex}{Example}[section]
\newtheorem{rem}{Remark}[section]
\newtheorem{alg}{Algorithm}[section]
\newtheorem{assu}{Assumption}[section]
\newcommand{\setd}{{ d \kern -.15em l}}
\newcommand{\hatsetd}{ d \hat{\kern -.15em l }}
\newcommand{\x}{{ x}}
\newcommand{\K}{{\cal K}}
\newcommand{\N}{{\cal N}}
\newcommand{\CP}{{\cal P}}
\newcommand{\bgeqn}{\begin{eqnarray}}
\newcommand{\edeqn}{\end{eqnarray}}
\newcommand{\bgeq}{\begin{eqnarray*}}
\newcommand{\edeq}{\end{eqnarray*}}
\newcommand{\bec}{\begin{center}}
\newcommand{\enc}{\end{center}}
\newcommand{\R}{{\rm I\!R}}
\newcommand{\inmat}[1]{\mbox{\rm {#1}}}
\newcommand{\D}{{\cal D}}
\newcommand{\F}{{\cal F}}
\newcommand{\M}{{\cal M}}
\newcommand{\Y}{{\cal Y}}
\newcommand{\be}{\begin{equation}}
\newcommand{\ee}{\end{equation}}
\def\w{\omega}
\def\bbe{{\Bbb{E}}} 
\renewcommand{\Box}{\hfill \rule{2.3mm}{2.3mm}}
\begin{document}

\title{Discrete Approximation of Two-Stage Stochastic and Distributionally Robust Linear Complementarity Problems}

\vspace{0.5cm}

\author{Xiaojun Chen\thanks{
Department of Applied Mathematics,
The Hong Kong Polytechnic University,
Hong Kong,
(maxjchen@polyu.edu.hk). This author's work is supported in part by Hong Kong Research Grant Council PolyU153016/16p} \hspace{0.3in}
Hailin Sun\thanks{
School of Economics and Management, Nanjing University of Science and Technology, Nanjing, 210094, China,
 (hlsun@njust.edu.cn). This author's work is supported in part by the National Natural Science Foundation of China (Grant No. 11401308)} \hspace{0.3in}
Huifu Xu\thanks{
 School of Mathematics,
  University of Southampton,
 Southampton, SO17 1BJ, UK,
(h.xu@soton.ac.uk). The work of this author is  supported in part by EPSRC Grant EP/M003191/1}}

\maketitle

\noindent
{\bf Abstract.}
 In this paper, we propose a discretization scheme for the
 two-stage stochastic linear complementarity  problem (LCP) where the underlying random data are continuously distributed.
   Under some moderate conditions, we derive qualitative and quantitative
 convergence for the solutions obtained from solving the discretized two-stage stochastic LCP (SLCP).
   We explain how the discretized two-stage SLCP may be solved by
 the well-known progressive hedging method (PHM).
 Moreover, we extend the discussion by
considering a  two-stage distributionally robust LCP (DRLCP) with moment constraints and
proposing a discretization scheme for the DRLCP.
As an application,
we show how the SLCP and DRLCP models
can be used to study
equilibrium arising from
two-stage duopoly game where each player
plans to set up its optimal capacity at present with anticipated
competition for production in future.

\noindent
{\bf Key Words.}  Two-stage stochastic linear complementarity  problem, discrete approximation, error bound,  distributionally robust linear complementarity  problem,  ex post equilibrium

\section{Introduction}

Let $\xi:\Omega\to\R^l$ be a random variable defined in the probability space $(\Omega, \F, P)$ with support set $\Xi\subset \R^l$ and $\Y$ be the space of measurable functions defined on $\Xi$.
We consider the following two-stage stochastic linear
complementarity problem: 
find $(x,y(\cdot))\in \R^n\times \Y$ which solves
\begin{equation}
\inmat{(SLCP)} \quad \label{eq:slcp-two1}
\left\{
\begin{array}{l}
0\leq x \perp A x + \bbe[B(\xi)y(\xi)] +  q_1\geq0, \\
0\leq y(\xi) \perp M(\xi)y(\xi) + N(\xi)x  + q_2(\xi)\geq0, \;\;  \mbox{ for a.e.}\, \xi\in\Xi,
\end{array}\right.
\end{equation}
where $A\in\R^{n\times n}$, $q_1\in\R^n$, $B(\cdot): \R^{l} \to \R^{n \times m}$,
$M(\cdot): \R^{l} \to \R^{m \times m}$, $N(\cdot): \R^{l} \to \R^{m\times n} $ and $q_2(\cdot): \R^{l} \to \R^m$
are continuous matrix valued mappings,  the mathematical expectation is taken componentwise w.r.t. probability distribution of $\xi$, and abbreviation ``a.e.'' stands for almost everywhere.
In the case when $\xi$ follows a finite discrete distribution, the problem above reduces to a deterministic complementarity problem which has been extensively investigated over the past few decades, see
monographs \cite{CoPaSt92,FaPa03}. 
Note that if  we consider $(\Xi,\mathscr{B})$ as a measurable space  equipped with Borel sigma algebra $\mathscr{B}$, then
 $P$ may be viewed as probability measure
defined on $(\Xi,\mathscr{B})$ induced by the random variable $\xi$ and consequently
$P$ is called the probability distribution of $\xi$.
Throughout the paper, we use terminology probability measure and probability distribution interchangeably.

In the first stage of SLCP (\ref{eq:slcp-two1}), one is supposed to
find $x\in \R^n$ here and now before the random data $\xi(\w)$ is available. At the second stage
 when $x$ is fixed and a realization of $\xi(\w)=\xi$ becomes known,
$y\in\R^m$ is sought to satisfy the second stage of SLCP
 (\ref{eq:slcp-two1}). The two-stage
 model arises naturally from first order optimality conditions of a two-stage stochastic
linear program with recourse (see \cite{BiL11,SP09}). It can also be used to characterize
equilibrium arising from a two-stage stochastic game where at the first stage players compete
for capacity expansion before realization of uncertainty and at the second stage they bid for producing goods or services after production capacity is developed and uncertainty (i.e. market demand) is observed.

The two-stage SLCP  (\ref{eq:slcp-two1}) may be regarded as a special case of two-stage
stochastic variational inequalities (SVI)
considered by Chen, Pong and  Wets \cite{XTW17} and multi-stage SVI
 developed by Rockafellar and Wets \cite{rw2017}, which  synthesize and extend the well investigated ERM models \cite{XF05, XWZ12} and expected value models \cite{Gurkan99a, JX08} for one-stage SVI. In a more recent
development, Rockafellar and Sun \cite{rs2017} apply the well-known 
PHM to solve multi-stage SVI and SNCP. However,  the application is restricted to the case where the distribution of  the underlying uncertainty is
discrete and finite.

In this paper, we discuss discrete approximation of the two-stage SLCP which is partly aimed to fill out
 the gap on application of  PHM to the continuously distributed SLCP. We are focusing on a
 two-stage SLCP so that we may concentrate on the key ideas in our approach and leave the potential extension to multi-stage and/or non-linear case for future research.

Extending from the two-stage SLCP (\ref{eq:slcp-two1}), we
 consider a situation where  the true probability distribution $P$ is unknown but it is possible to
 use partial information
 to construct an ambiguity set $\CP$ of distributions which contains the true distribution.
Consequently,  we
propose  a  two-stage  distributionally robust
 LCP as follows:
\begin{equation}\label{eq:slcp-two1-1-DRO}
\inmat{(DRLCP)} \quad \left\{
\begin{array}{ll}
0\leq x \perp A x + \bbe_P[B(\xi)y(\xi)] +  q_1\geq0, &\forall P\in \CP,\\
0\leq y(\xi) \perp M(\xi)y(\xi) + N(\xi)x  + q_2(\xi)\geq0, &  \inmat{ for } P\inmat{-a.e. }  \xi\in\Xi,  P\in \CP.
\end{array}
\right.
\end{equation}
The DRLCP
requires every solution to satisfy the first stage complementarity condition
for all $P\in\CP$ to mitigate the risk arising from ambiguity of the true distribution.
Obviously the new model is more demanding on its solution than the two-stage
SLCP  \eqref{eq:slcp-two1}
and as a result, it might not have a solution if the ambiguity set is too large.
Like the two-stage SLCP \eqref{eq:slcp-two1} which stems from two-stage stochastic linear programming or
two-stage stochastic games with continuous actions, the two-stage 
 DRLCP \eqref{eq:slcp-two1-1-DRO}
can be linked to distributionally robust optimization and games.
Indeed, the two-stage DRLCP \eqref{eq:slcp-two1-1-DRO} may be used to characterize
the first order optimality conditions of the so-called ex post optimal
solution to a two-stage distributionally robust optimization problem
and  the ex post equilibrium of two-stage distributionally robust games.

The paper has three main contributions.
\begin{itemize}

\item[(i)] We provide sufficient conditions for the existence and
 uniqueness of the solution  of the two-stage SLCP (Proposition \ref{p:solutiony-x}) and
propose a discrete approximation scheme for the problem.
We then present
some qualitative and quantitative convergence analysis of the solutions obtained from solving
the discretized two-stage SLCP to their true counterparts (Theorems \ref{t:convergence} and \ref{t-quantitative-sect3}).
Application of  PHM to the discretized problem is outlined (Section 3.3).

\item[(ii)]
In the absence of complete information on the true probability distribution,
we propose a distributionally robust model for the
 two-stage SLCP,
the model is
parallel to the
ex post equilibrium of robust games studied by Aghassi and Bertsimas \cite{AgB06}.
We derive a dual formulation of the DRLCP model and discuss a discretization approach
for the latter  (Section 4).

\item[(iii)]
As an application as well as a motivation,
we propose a
two-stage distributionally robust game in a duoploy market where two players need to make
strategic decisions on capacity for future production with anticipation of
Nash-Cournot type competition after demand uncertainty in future is observed.
Under some standard conditions, we reformulate the problem as
 a two-stage SLCP when the true  distribution of the demand uncertainty is known and a two-stage DRLCP otherwise.
We give an academic  example to show existence of a solution to the
 two-stage DRLCP  (Section 5.1).

\end{itemize}

Throughout the paper, we use the following notation.
 $\R_{+}^{n}$
denotes the
  non-negative orthant
 and $\R_{++}^{n}$  the interior of $\R_+^n$.
For a vector $a\in\R^n$, we write $(a)_+$ for $\max(0,a)$,
where the maximum is taken componentwise.
    For matrices $A,B\in\R^{n\times n}$,
we write
 $A\succeq B$ and $A\succ B$
 to indicate $A-B$ being positive semidefinite
 and positive definite respectively.
  Differing from the convention in semi-definite programming, here
 $A$ and $B$ are not necessarily symmetric.
  We use $\|\cdot\|$ to denote the $2$-norm in
 both vector and matrix spaces and indicate
 any other norms by a subscript such as
    the infinity norm $\|\cdot\|_\infty$.
  Finally, to ease the exposition, we write
 $i\in \bar{K}$ for $i=1,\cdots,K$.

\section{Structure of the two-stage SLCP}

Although our main emphasis, later on in this paper, will rest on the case
where the second stage of SLCP  (\ref{eq:slcp-two1}) has a unique solution, we believe that it will be helpful to discuss a precise meaning of the model  in a general setting where the second stage of SLCP  (\ref{eq:slcp-two1})
has
multiple solutions for each fixed $x$ and $\xi$.
Let $\Y(x,\xi)$ denote the set of solutions of the second stage of SLCP  (\ref{eq:slcp-two1}). Then the two-stage SLCP can be
written as: find $x\in\R^n$ and $y(\cdot)\in \Y(x,\xi(\cdot))$ which solve
\bgeqn
0\leq x \perp A x + \bbe[B(\xi)y(\xi)] +  q_1\geq0.
\label{eq:slcp-two1-2-a}
\edeqn
Here $\Y(x,\xi(\cdot)): \Omega\to \Y$ is a random set-valued mapping, and
$y(\cdot)$ is a
measurable selection  such that
$
\bbe[ B(\xi)y(\xi)]
$
is finite-valued. In the case when $\Y(x,\xi)$ is a singleton for each $x\in \R^n$ and $\xi\in\Xi$,
 $\Y(x,\xi)=\{\bar{y}(x, \xi)\}$.

In what follows, we investigate conditions for the
 existence and uniqueness of a solution to (\ref{eq:slcp-two1}). For this purpose, we
 make the following technical
assumptions.

\begin{assu}\label{A-AD}
  There exists a
 positive continuous function $\kappa(\xi)$ such that $\bbe[\kappa(\xi)]<+\infty$ and for almost every $\xi$,
\bgeqn\label{eq:ABmatrix}
\begin{pmatrix}
z^T, u^T
\end{pmatrix}
\begin{pmatrix}
A & B(\xi)\\
N(\xi) & M(\xi)
\end{pmatrix}\begin{pmatrix}
z\\
u
\end{pmatrix} \geq \kappa(\xi)(\|z\|^2+\|u\|^2), \; \;\; \forall z\in\R^n, \; u\in\R^{m}.
\edeqn
\end{assu}

Let ${\cal D}$ denote the set of $m\times m$ diagonal matrices
 $D$ with diagonal components $D_{jj}\in \{0,1\}$, for $j\in\bar{m}$, and
 $M$ be an $m\times m$ positive definite matrix. Let
 $\mathscr{J}$ denote the power set of
$\{1,\cdots,n\}$ and $J\in \mathscr{J}$. Let 
$D_J\in {\cal D}$ with
 $$
 (D_J)_{jj} = \left\{
 \begin{array}{ll}
 1, & \inmat{if} \;\; j \in J,\\
 0, & \inmat{otherwise}.
 \end{array}\right.
$$
It is known that
$I - D_J (I- M)$ is invertible when $M\succ 0$, see for instance
\cite{XC13}. Let
$$
 U_J(M)= (I - D_J (I- M))^{-1}D_J.
 $$
By permutation if necessary,
we assume
for the simplicity of exposition
that
$J=\{1, 2, \cdots |J|\}$, where $|J|$ denotes the cardinality of set $J$. 
Consequently, we know from \cite{XC13}
that
$$
U_J(M) =\left\{
\begin{array}{ccc}
0_{n\times n}, & \mbox{if} \; J = \emptyset,\\
\begin{pmatrix}
 M_J^{-1} & 0\\
 0  & 0
\end{pmatrix},
& \mbox{ otherwise},
\end{array}
\right.
$$
where $M_J $ is the $|J|\times |J|$ sub-matrix of $M$ whose entries of $M$ are indexed by the set
$J \in \mathscr{J}$.

From time to time in the follow-up discussions,  we need to
look into  positive definiteness of $A-B(\xi)U_J(M(\xi))N(\xi)$ and its inverse.
To this end, we state the following intermediate technical result.

\begin{lema}\label{l:matrixschur}
Under Assumption \ref{A-AD}, the following assertions hold.
\begin{itemize}
\item[(i)] $z^T Az\geq
\sup_{\xi\in \Xi} \kappa(\xi)\|z\|^2$
for all $z\in\R^n$,  and $u_J^T M_J(\xi)u_J\geq \kappa(\xi)\|u_J\|^2$, for all  $u_J\in\R^{|J|}$ and $J\in\mathscr{J}$.

\item[(ii)]
$
A-B(\xi) U_J(M(\xi)) N(\xi)
$
is well defined and
$
z^T(A-B(\xi) U_J(M(\xi)) N(\xi))z\geq \kappa(\xi)\|z\|^2,
$
for all $z\in\R^n$.

\item[(iii)] $\|M_J(\xi)^{-1}\|\leq \frac{1}{\kappa(\xi)}$ and
$
\|(A-B(\xi) U_J(M(\xi)) N(\xi))^{-1}\|\leq \frac{1}{\kappa(\xi)}.
$
\end{itemize}
\end{lema}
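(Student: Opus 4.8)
The plan is to extract all three assertions from the single quadratic-form inequality (\ref{eq:ABmatrix}) by feeding in carefully chosen test vectors $(z,u)$, treating (i) and (iii) as short consequences and reserving the real work for the Schur-complement identity behind (ii). Throughout I suppress the argument $\xi$ where convenient.

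First I would prove (i). Fixing $z\in\R^n$ and setting $u=0$ in (\ref{eq:ABmatrix}) gives $z^T A z\geq \kappa(\xi)\|z\|^2$ for almost every $\xi$; since the left-hand side is independent of $\xi$ and $\kappa$ is continuous, the inequality in fact holds for every $\xi\in\Xi$ (an exceptional point would, by continuity, force the bound to fail on a whole neighborhood, a positive-measure set), so $z^T A z\geq\sup_{\xi\in\Xi}\kappa(\xi)\|z\|^2$. For the second bound I would fix $J\in\mathscr{J}$ and, after the assumed permutation, insert $u=(u_J,0)\in\R^m$ with $z=0$: the quadratic form then reduces to $u_J^T M_J(\xi)u_J$ while $\|u\|^2=\|u_J\|^2$, giving $u_J^T M_J(\xi)u_J\geq\kappa(\xi)\|u_J\|^2$. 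Taking $J=\bar m$ shows in particular $M(\xi)\succ0$, which is exactly what makes $U_J(M(\xi))$ well defined through the formula recalled from \cite{XC13}.

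The core of the argument is (ii). Writing $K$ for the complement of $J$ and partitioning the columns of $B(\xi)$ as $(B_J,B_K)$, the rows of $N(\xi)$ into $N_J$ and $N_K$, and $M(\xi)$ into the corresponding blocks, the block form of $U_J$ gives $B(\xi)U_J(M(\xi))N(\xi)=B_J M_J(\xi)^{-1}N_J$, so that $A-B(\xi)U_J(M(\xi))N(\xi)$ is precisely the (nonsymmetric) Schur complement $A-B_J M_J^{-1}N_J$. For a fixed $z$ I would substitute the tailored vector $u=(u_J,0)$ with $u_J=-M_J(\xi)^{-1}N_J z$ into (\ref{eq:ABmatrix}). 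The decisive point is the cancellation $u_J^T N_J z+u_J^T M_J(\xi)u_J=0$, valid because $M_J(\xi)u_J=-N_J z$; this collapses the entire quadratic form to $z^T(A-B_J M_J^{-1}N_J)z=z^T(A-B(\xi)U_J(M(\xi))N(\xi))z$. Since (\ref{eq:ABmatrix}) bounds this below by $\kappa(\xi)(\|z\|^2+\|u_J\|^2)\geq\kappa(\xi)\|z\|^2$, assertion (ii) follows; the case $J=\emptyset$ is just (i), as $U_\emptyset=0$.

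Finally, (iii) is a standard estimate for a matrix whose field of values is bounded away from the origin: from $v^T G v\geq\kappa(\xi)\|v\|^2$ and Cauchy--Schwarz one gets $\|Gv\|\geq\kappa(\xi)\|v\|$, whence $\|G^{-1}\|\leq 1/\kappa(\xi)$; applying this with $G=M_J(\xi)$ (using (i)) and with $G=A-B(\xi)U_J(M(\xi))N(\xi)$ (using (ii)) yields both bounds. I expect the only delicate step to be the substitution in (ii): one must pick $u_J$ to reproduce the Schur complement rather than the naive minimizer $-(M_J(\xi)+M_J(\xi)^T)^{-1}(B_J^T+N_J)z$ of the $u_J$-quadratic that completing the square would suggest, since the asymmetry of the block matrix makes these two choices disagree and only the former recovers $z^T(A-B(\xi)U_J(M(\xi))N(\xi))z$ exactly.
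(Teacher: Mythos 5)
Your proof is correct and follows essentially the same route as the paper: your test vector $(u_J,0)$ with $u_J=-M_J(\xi)^{-1}N_Jz$ is exactly the paper's substitution $u=-U_J(M(\xi))N(\xi)z$ written in block coordinates, and your direct cancellation $u_J^TN_Jz+u_J^TM_J(\xi)u_J=0$ is just a blockwise verification of the identity $U_J(M(\xi))M(\xi)U_J(M(\xi))=U_J(M(\xi))$ that the paper invokes at the same step. The details you supply for (i) (the a.e.-to-everywhere argument via continuity of $\kappa$ and positivity of the measure of neighborhoods of support points) and for (iii) (the Cauchy--Schwarz estimate $\|Gv\|\geq\kappa(\xi)\|v\|$) correctly fill in steps the paper dismisses as straightforward.
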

\noindent
\textbf{Proof.} We only prove Part (ii) since Part (i)
follows straightforwardly from (\ref{eq:ABmatrix})
and Part (iii) follows from Parts (i) and (ii).
By setting $u= - U_J(M(\xi))N(\xi)z$ in  \eqref{eq:ABmatrix}, and using
$U_J(M(\xi))M(\xi)U_J(M(\xi)=U_J(M(\xi))$, we have
$$
z^TAz - z^TB(\xi)U_J(M(\xi))N(\xi)z\geq \kappa(\xi)(\|z\|^2 + \|U_J(M(\xi))N(\xi)z\|^2)\geq \kappa(\xi)\|z\|^2
$$
for any $z\in \R^n$.
\hfill$\Box$

We are now ready to state
 existence of solutions to  SLCP  \eqref{eq:slcp-two1} and the
structure of the second stage solution.

\begin{prop}
\label{p:solutiony-x}
 Let Assumption \ref{A-AD} hold.
 For any given $x$ and  $\xi\in \Xi$, let $D(x,\xi) \in \D$ be an $m$-dimensional diagonal matrix with
$$
D_{jj}(x,\xi) : = \left\{
\begin{array}{ll}
1, & {\rm if } \; \big(M(\xi)y(\xi) + N(\xi)x  + q_2(\xi)\big)_j \leq y_j(\xi),\\
0, & {\rm otherwise }.
\end{array}
\right.
$$
Let
\bgeqn
W(x, \xi) = [I - D(x, \xi) (I- M(\xi))]^{-1}D(x, \xi)
\label{eq:W}
\edeqn
 and
$$
J(x, \xi) = \{j : (M(\xi)y(\xi) + N(\xi)x  + q_2(\xi))_j \leq y_j(\xi)\}.
$$
Then the following assertions hold.
\begin{itemize}

\item[(i)] 
The two-stage SLCP \eqref{eq:slcp-two1} has a unique solution
$(x^*, y^*(\cdot))\in\R^n\times \Y$.

\item[(ii)] The solution to the second stage of SLCP \eqref{eq:slcp-two1} can be written as
\begin{equation}\label{eq:soly-o}
\bar{y}(x,\xi) =- W(x, \xi)(N(\xi)x + q_2(\xi))
\end{equation}
and $\bar{y}(\cdot, \xi)$ is globally  Lipschitz continuous w.r.t $x$.

\item[(iii)] The  first equation 
of SLCP \eqref{eq:slcp-two1} 
can be reformulated as 
\begin{equation}\label{eq:slcp2_1}
0\leq x \; \perp \; (A - \bbe[B(\xi) W(x,\xi)N(\xi)])x  -\bbe[B(\xi) W(x,\xi)q_2(\xi)] +  q_1\geq0,
\end{equation} 
where
 $$
 \|(A - \bbe[B(\xi) W(x,\xi)N(\xi)])^{-1}\|\leq \frac{1}{\bbe[\kappa(\xi)]}<+\infty.
 $$

\item[(iv)] Let
\bgeqn\label{eq:p2-1}
F(x)  := \min\big(x, (A - \bbe[B(\xi) W(x,\xi)N(\xi)])x  -\bbe[B(\xi) W(x,\xi)q_2(\xi)] +  q_1\big).
\edeqn
Then
$F$ is Lipschitz continuous and
every matrix $V_x$ in
the Clarke generalized Jacobian
$\partial F(x)$
(see definition in \cite[Section 2.6]{Cla83})
is nonsingular with $\|V_x^{-1}\|\leq \bar{d}$ for some constant $\bar{d}>0$ which is independent of $x$. 

\end{itemize}
\end{prop}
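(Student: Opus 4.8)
The plan is to treat the two stages in turn, reduce the problem to a single strongly monotone nonlinear complementarity problem (NCP) in $x$, and then analyse the associated min-map. I begin with Part (ii). For fixed $x$ and $\xi$, Lemma \ref{l:matrixschur}(i) with $J=\{1,\dots,m\}$ shows $M(\xi)\succ 0$, so the second-stage system $0\leq y\perp M(\xi)y+(N(\xi)x+q_2(\xi))\geq 0$ is a positive definite LCP and hence has a unique solution $\bar y(x,\xi)$. To obtain the closed form I would read off the active set $J(x,\xi)$ from the complementarity conditions: on $J(x,\xi)$ the residual vanishes while $y$ is free, off $J(x,\xi)$ one has $y_j=0$; solving the resulting linear system on $J(x,\xi)$ gives $\bar y_J=-M_J(\xi)^{-1}(N(\xi)x+q_2(\xi))_J$ and $\bar y_{J^c}=0$, which is exactly $-U_{J(x,\xi)}(M(\xi))(N(\xi)x+q_2(\xi))=-W(x,\xi)(N(\xi)x+q_2(\xi))$ by the formula for $U_J$ recalled before Lemma \ref{l:matrixschur}. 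Global Lipschitz continuity of $\bar y(\cdot,\xi)$ follows from the standard monotonicity estimate for positive definite LCPs: subtracting the two systems for $x^1,x^2$ and using $M(\xi)\succ 0$ yields $\|\bar y(x^1,\xi)-\bar y(x^2,\xi)\|\leq \kappa(\xi)^{-1}\|N(\xi)\|\,\|x^1-x^2\|$.

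For Parts (i) and (iii), I would substitute \eqref{eq:soly-o} into the first stage. Since $-B(\xi)W(x,\xi)(N(\xi)x+q_2(\xi))=B(\xi)\bar y(x,\xi)$, the first-stage map becomes $G(x):=Ax+\bbe[B(\xi)\bar y(x,\xi)]+q_1$, which is precisely the left-hand side of \eqref{eq:slcp2_1}; this gives the reformulation. The key structural fact is that $W(x,\xi)=U_{J(x,\xi)}(M(\xi))$, so Lemma \ref{l:matrixschur}(ii) applies verbatim to give $z^T(A-B(\xi)W(x,\xi)N(\xi))z\geq \kappa(\xi)\|z\|^2$ for all $z$; taking expectations shows $A-\bbe[B(\xi)W(x,\xi)N(\xi)]$ is positive definite with modulus $\bbe[\kappa(\xi)]$, whence $\|(A-\bbe[B(\xi)W(x,\xi)N(\xi)])^{-1}\|\leq \bbe[\kappa(\xi)]^{-1}$ (for a not-necessarily-symmetric $H$ with $z^THz\geq\mu\|z\|^2$ one has $\|Hz\|\geq\mu\|z\|$, hence the inverse bound). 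To establish existence and uniqueness, I would show $G$ is strongly monotone on $\R_+^n$: writing $\Delta x=x^1-x^2$ and $\Delta y(\xi)=\bar y(x^1,\xi)-\bar y(x^2,\xi)$, the second-stage estimate gives $\Delta y(\xi)^TM(\xi)\Delta y(\xi)+\Delta y(\xi)^TN(\xi)\Delta x\leq 0$, while Assumption \ref{A-AD} with $z=\Delta x,\,u=\Delta y(\xi)$ gives $\Delta x^TA\Delta x+\Delta x^TB(\xi)\Delta y(\xi)\geq \kappa(\xi)\|\Delta x\|^2$ after discarding the nonnegative quantity just bounded; integrating yields $\langle \Delta x,\,G(x^1)-G(x^2)\rangle\geq \bbe[\kappa(\xi)]\|\Delta x\|^2$. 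A continuous strongly monotone NCP has a unique solution $x^*$, and $y^*(\cdot)=\bar y(x^*,\cdot)$ is the corresponding measurable second-stage selection, proving (i).

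For Part (iv), Lipschitz continuity of $F$ is immediate, since $G$ is Lipschitz (by the Lipschitz property of $\bar y$ and dominated convergence) and $\min$ is Lipschitz. I would then invoke the standard description of the Clarke generalized Jacobian of a min-map: every $V_x\in\partial F(x)$ can be written as $V_x=D+(I-D)V_G$ with $D$ a diagonal matrix whose entries lie in $[0,1]$ and $V_G\in\partial G(x)$. Because $\bar y(\cdot,\xi)$ is piecewise linear with pieces $-U_J(M(\xi))N(\xi)$, each $V_G$ has the form $A-\bbe[B(\xi)\Gamma(\xi)N(\xi)]$ with $\Gamma(\xi)$ a convex combination of matrices $U_J(M(\xi))$; as the quadratic-form bound of Lemma \ref{l:matrixschur}(ii) is preserved under convex combinations, every $V_G$ still satisfies $z^TV_Gz\geq \bbe[\kappa(\xi)]\|z\|^2$, and $\|V_G\|\leq \|A\|+\bbe[\|B(\xi)\|\,\|N(\xi)\|/\kappa(\xi)]=:\bar M$ by Lemma \ref{l:matrixschur}(iii). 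Setting $w=V_Gz$ and $u=V_xz$, so that $u_i=D_{ii}z_i+(1-D_{ii})w_i$ is a componentwise convex combination, a short computation gives
\begin{equation*}
\langle u,z\rangle+\langle u,w\rangle=\sum_i\big(D_{ii}z_i^2+(1-D_{ii})w_i^2\big)+z^TV_Gz\geq \bbe[\kappa(\xi)]\|z\|^2,
\end{equation*}
while the left-hand side is at most $\|u\|(\|z\|+\|w\|)\leq (1+\bar M)\|u\|\,\|z\|$. Hence $\|V_x^{-1}\|\leq (1+\bar M)/\bbe[\kappa(\xi)]=:\bar d$, uniformly in $x$.

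The main obstacle is Part (iv): making rigorous the generalized-Jacobian calculus for $F=\min(x,G(x))$ when $G$ is itself only piecewise smooth (an expectation of piecewise linear maps), and verifying that every $V_G\in\partial G(x)$ genuinely inherits both the uniform positive-definiteness modulus $\bbe[\kappa(\xi)]$ and the uniform norm bound $\bar M$. This requires interchanging $\partial$ with the expectation and with the $\min$ operation, together with the convexity argument that preserves Lemma \ref{l:matrixschur}(ii) under the convex hull appearing in $\partial G(x)$; the finiteness of $\bar M$ also presupposes the integrability $\bbe[\|B(\xi)\|\,\|N(\xi)\|/\kappa(\xi)]<\infty$, which I would record among the standing assumptions. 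Once this structural description is in hand, the remaining estimates are the elementary computations indicated above.
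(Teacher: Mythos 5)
Your proposal is correct, and it reaches the conclusion by a genuinely different route in the two places where the paper does real work. For part (i), the paper proves strong monotonicity of the full infinite-dimensional system \eqref{eq:slcp-two1} in the Hilbert space $\R^n\times \Y$ equipped with the ${\cal L}_2$ inner product and then cites \cite[Theorem 12.2 and Lemma 12.2]{HaKoSc} for existence and uniqueness; you instead eliminate the second stage via $\bar y(x,\xi)$, use the LCP comparison inequality $\Delta y^T M(\xi)\Delta y+\Delta y^T N(\xi)\Delta x\le 0$ together with Assumption \ref{A-AD} to show the reduced map $G(x)=Ax+\bbe[B(\xi)\bar y(x,\xi)]+q_1$ is strongly monotone on $\R^n_+$, and invoke finite-dimensional NCP theory. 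Your reduction is more elementary and self-contained, while the paper's Hilbert-space argument would survive settings where the second stage is not single-valued and no reduction map exists. For part (iv), the paper establishes that $I-D+D\,\bbe[\Gamma_J(\xi)]$ is a P-matrix and then appeals to the Chen--Xiang error bound \cite[formula (1.6)]{XC06a} to get an $\infty$-norm bound on the inverse; your inner-product identity $\langle u,z\rangle+\langle u,w\rangle=\sum_i\bigl(D_{ii}z_i^2+(1-D_{ii})w_i^2\bigr)+z^TV_Gz$ replaces that machinery with a two-line estimate giving the $2$-norm bound $(1+\bar M)/\bbe[\kappa(\xi)]$, and it applies verbatim when the diagonal weights lie in $[0,1]$, which is the correct Clarke calculus for the min-map at kink points (the paper's containment \eqref{eq:partialF} is written only with $0$-$1$ matrices $D\in\D$). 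Two caveats, both shared with the paper rather than specific to you: the description of $\partial F(x)$ via a single $V_G\in\partial G(x)$ glosses over the fact that the Clarke Jacobian is only contained in the row-wise product of component subdifferentials, so strictly one should allow different rows of $V_G$ to come from different matrices $A-\bbe[B(\xi)\Gamma(\xi)N(\xi)]$ --- your quadratic-form argument does not automatically cover that case, but neither does the paper's P-matrix argument, and both can be repaired the same way; and the integrability $\bbe[\|B(\xi)\|\|N(\xi)\|/\kappa(\xi)]<\infty$ that you flag as a standing assumption is indeed also used implicitly by the paper (it assumes integrability of $B(\xi)U_J(M(\xi))N(\xi)$ just before \eqref{eq:Xiep}), so flagging it is appropriate rather than a defect.
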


\begin{proof} We only prove Part (i) and Part (iv)
as the other
parts  follow straightforwardly from \cite{XC13}, Lemma \ref{l:matrixschur} and the implicit function theorem
\cite[Lemma 2.2]{Xu06}.

Part (i).
For this purpose,  we
 prove monotonicity of the
 infinite complementarity
  system  \eqref{eq:slcp-two1}.
Let $\langle\cdot,\cdot\rangle$ denote
the scalar product in the Hilbert space
of
$\R^n\times \Y$
equipped with ${\cal L}_2$-norm,
that is, for $x,z\in \R^n$ and $y,u\in \Y$,
$$
\langle (x,y), (z,u)\rangle := x^Tz+ \int_\Xi y(\xi)^T u(\xi) P(d\xi).
$$
Under Assumption \ref{A-AD}, for any $(x, y(\cdot)), (z, u(\cdot))\in \R^n\times \Y$, we have
$$
\begin{array}{lll}
\left\langle
\begin{pmatrix}
 A (x- z) + \bbe[B(\xi)(y(\xi) - u(\xi))] \\
 M(\xi)(y(\xi) - u(\xi)) + N(\xi)(x - z)
\end{pmatrix},
\begin{pmatrix}
 x-z\\
 y(\xi) - u(\xi)
\end{pmatrix}
 \right\rangle\\
 =
 \bbe \left[\begin{pmatrix}
x-z\\
y(\xi) - u(\xi)
\end{pmatrix}^T \begin{pmatrix}
A & B(\xi)\\
N(\xi) & M(\xi)
\end{pmatrix} \begin{pmatrix}
x-z\\
y(\xi) - u(\xi)\end{pmatrix}\right]\\
\geq
\bbe[\kappa(\xi)(\|x-z\|^2 + \|y(\xi)-u(\xi)\|^2)].
 \end{array}
$$
The existence and uniqueness of $(x^*, y^*(\cdot))$ follow from the monotonicity of problem \eqref{eq:slcp-two1}, see \cite[Theorem 12.2 and Lemma 12.2]{HaKoSc}.

Part (iv).
Let us rewrite $F(x)$ in \eqref{eq:p2-1}  as
$$
F(x) =\min(x, Ax - \bbe[B(\xi)\bar{y}(x, \xi)] +q_1),
$$
where $\bar{y}(x, \xi)$ is defined in \eqref{eq:soly-o}. By \cite[Theorem 2.1]{XC13},
$$
\begin{array}{lll}
\partial_x \bar{y}(x, \xi) &=& \inmat{conv}\left\{\lim_{z\to x} \nabla_z \bar{y}(z, \xi) = -[I - D(z, \xi)(I-M(\xi))]^{-1}D(z, \xi)N(\xi): \bar{y}(z, \xi)  \inmat{ is nondegenerate}\right\}\\
&\subseteq& \inmat{conv}\{-U_J(M(\xi))N(\xi): J\in\mathscr{J} \},
\end{array}
$$
where   ``conv'' denotes  convex hull of a set, $\partial_x \bar{y}(x, \xi)$ denotes the Clarke generalized Jacobian
with respect to $x$ and   non-degeneration  of $\bar y(z, \xi)$ means that $\{i | (M(\xi)\bar y(z, \xi) + N(\xi)z +q_2(\xi))_i= \bar y_i(z, \xi) \} = \emptyset$.
 To ease the exposition,
let $\Gamma_J(\xi) =  A- B(\xi)U_J(M(\xi))N(\xi)$.
Then
$$
\partial_x \bbe[Ax - B(\xi)\bar y(x, \xi)+q_1] 
\subseteq \bbe[\inmat{conv}\{\Gamma_J(\xi): J\in\mathscr{J} \}],
$$
where the expectation is taken in the sense of Aumann \cite{aum},
and hence
\bgeqn\label{eq:partialF}
\partial F(x) \subseteq \{\bbe[I-D + D \Upsilon(\xi)]: \Upsilon(\xi) \in\inmat{conv} \{\Gamma_J(\xi)\},  J\in\mathscr{J}, D\in\D \},
\edeqn
where $\D$ is defined immediately after Assumption \ref{A-AD}.
On the other hand, under  Assumption \ref{A-AD},
$\Gamma_J(\xi)$ is positive definite for all $\xi$
and
it follows by Lemma \ref{l:matrixschur} (ii)
$$
z^T\bbe[\Gamma_J(\xi)] z = \sum_{i=1}^n z_i(\bbe[\Gamma_J(\xi)] z)_i \geq \bbe[\kappa(\xi)]\|z\|^2, \;\; \forall z\in\R^n.
$$
This implies
$$
\max_{i\in\bar{n}}z_i(\bbe[\Gamma_J(\xi)] z)_i\geq \frac{\bbe[\kappa(\xi)]}{n}\|z\|^2,  \;\; \forall z\in\R^n.
$$
For a fixed $z\in\R^n$, let
$
i_0= \arg\max_{i\in\bar{n}} z_i (\bbe[\Gamma_J(\xi)]z)_i.
$
Then  
\begin{equation}\label{eq:kappai0}
\frac{\bbe[\kappa(\xi)]}{n}\|z\|^2 \leq z_{i_0} (\bbe[\Gamma_J(\xi)]z)_{i_0} \leq |z_{i_0}| \|\bbe[\Gamma_J(\xi)]\|\|z\|
\end{equation}
from
which we deduce
\begin{equation}\label{eq:kappai1}
|z_{i_0}|\geq \frac{\bbe[\kappa(\xi)]}{n\|\bbe[\Gamma_J(\xi)]\|}\|z\|.
\end{equation}
Moreover
\bgeqn\label{eq:zi0}
 z_{i_0}((I-D)z)_{i_0} + z_{i_0}(D\bbe[\Gamma_J(\xi)]z)_{i_0} = \left\{
 \begin{array}{ll}
 |z_{i_0}|^2, & \; \inmat{if} \;\;  D_{i_0i_0} = 0,\\
 z_{i_0}(\bbe[\Gamma_J(\xi)]z)_{i_0}, & \; \inmat{if} \;\; D_{i_0i_0} = 1.
 \end{array}
 \right.
\edeqn
Let $\Theta=
\min \left\{\left(\frac{\bbe[\kappa(\xi)]}{n\|\bbe[\Gamma_J(\xi)]\|}\right)^2, \frac{\bbe[\kappa(\xi)]}{n} \right\}$.
Then we obtain by combining \eqref{eq:kappai0}-\eqref{eq:zi0} that
$$
z_{i_0}((I-D)z)_{i_0} + z_{i_0}(D\bbe[\Gamma_J(\xi)]z)_{i_0}\geq \Theta \|z\|^2.
$$
Hence for any $z\in\R^n$, we have
$$
\displaystyle\max_{i\in\bar{n}} z_i ((I-D+D\bbe[\Gamma_J(\xi)]) z)_i  =  \displaystyle \max_{i\in\bar{n}} z_i((I-D)z)_i + z_i(D\bbe[\Gamma_J(\xi)]z)_i
 \geq  \Theta \|z\|^2,
$$
which implies that $I-D+D\bbe[\Gamma_J(\xi)]$ is a P-matrix\footnote{Recall that $A \in \R^{n\times n}$ is a P-matrix if $\max_{i\in\bar{n}} z_i(Az)_i>0$ for all nonzero $z\in\R^n$.  }  for any $D\in \D$.
Let
$$
\theta(\bbe[\Gamma_J(\xi)])=\min_{\|z\|_{\infty} = 1} \left\{\max_{i\in\bar{n}}z_i(\bbe[\Gamma_J(\xi)]z)_i\right\}.
$$
It follows from (\ref{eq:kappai0}) and (\ref{eq:kappai1}) that
$\theta(\bbe[\Gamma_J(\xi)])\geq \Theta$ and
by \cite[formula (1.6)]{XC06a},
$$
\max_{D\in \D}\|(I-D+D\bbe[\Gamma_J(\xi)])^{-1}\|_\infty \leq \frac{\max\{1, \|\bbe[\Gamma_J(\xi)]\|_\infty\}}{\theta(\bbe[\Gamma_J(\xi)])}\leq \frac{1}{\Theta}\max\{1, \|\bbe[\Gamma_J(\xi)]\|_\infty\}.
$$
Through \eqref{eq:partialF}, this implies  that every matrix in $\partial_x F(x)$ is nonsingular  and the infinity norm of its inverse is bounded by $\max\{1, \|\bbe[\Gamma_J(\xi)]\|_\infty\}/\Theta$.  
\hfill$\Box$

\end{proof}

Recall that one of the main objectives of this paper is to
develop a discretization scheme for the two-stage SLCP \eqref{eq:slcp-two1}
and we will do so in the next section.
While it is not necessarily a prerequisite, we find it is much more convenient,
at least from presentational perspective, to discuss the approach
when the support set $\Xi$ is compact.
If we made a direct assumption on the compactness of $\Xi$, it
would exclude a number of practically interesting cases where the support set is unbounded.
In what follows, we try to address the dilemma  under some moderate conditions.

Let $\epsilon$ be a small positive number and $\Xi_\epsilon$ be a compact subset of $\Xi$.
Since $B(\xi)U_J(M(\xi))N(\xi)$ and $B(\xi)U_{J(\xi)}(M(\xi))q_2(\xi)$  are integrable for all $J(\xi)\subseteq \{ 1, \cdots, n\}$, then
we can find  $\Xi_\epsilon\subset \Xi$  such that
\bgeqn\label{eq:Xiep}
\left\|\int_{\Xi \backslash \Xi_\epsilon}B(\xi)U_{J(\xi)}(M(\xi))N(\xi) P(d\xi) \right\| \leq \epsilon  \; \mbox{ and } \;  \left\|\int_{\Xi \backslash \Xi_\epsilon}B(\xi)U_{J(\xi)}(M(\xi))q_2(\xi) P(d\xi)\right\| \leq \epsilon.
\edeqn
We consider complementarity problem
\begin{equation}
\label{eq:slcp2_1ep}
\left\{
\begin{array}{ll}
0\leq x \perp A x + \bbe_{\Xi_\epsilon}[B(\xi)y(\xi)] +  q_1\geq0, \\
0\leq y(\xi) \perp M(\xi)y(\xi) + N(\xi)x  + q_2(\xi)\geq0, \;\;  \mbox{ for a.e.}\, \xi\in\Xi_\epsilon,
\end{array}\right.
\end{equation}
where  $\bbe_{\Xi_\epsilon}[H(\xi)] := \int_{\xi\in \Xi_\epsilon}H(\xi)d\xi$. Under Assumption \ref{A-AD},
it follows by Proposition \ref{p:solutiony-x}
that
 \eqref{eq:slcp2_1ep} has a unique solution (see \cite{CoPaSt92, FaPa03}).
 Let us denote the solution by
 $(x_\epsilon,y_\epsilon(\cdot))$ and substitute
 $y_\epsilon(\xi)$ obtained from the second equation of \eqref{eq:slcp2_1ep} into the first equation,
 we obtain
\begin{equation}\label{eq:slcp2_1ep-1}
0\leq x \; \perp \; (A - \bbe_{\Xi_\epsilon}[B(\xi) W(x,\xi)N(\xi)])x  -\bbe_{\Xi_\epsilon}[B(\xi) W(x,\xi)q_2(\xi)] +  q_1\geq0,
\end{equation}
where $W(x,\xi)$ is defined as in (\ref{eq:W}).

The following proposition quantifies the difference between $x_\epsilon$ and the true solution of the two-stage SLCP \eqref{eq:slcp-two1}.

\begin{prop}
Let Assumption \ref{A-AD} hold. Then there exists a positive constant $\epsilon_0$ such that
the two-stage SLCP \eqref{eq:slcp2_1ep} has a unique solution $x_\epsilon$
for all $\epsilon\in(0, \epsilon_0]$. Moreover,
there exists a positive constant
$C$ such that
\bgeqn\label{eq:p2c}
\|x^*-x_\epsilon\| \leq
C\epsilon, \,\,\,  \forall  \epsilon\in(0, \epsilon_0].
\edeqn
\end{prop}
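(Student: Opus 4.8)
The plan is to reduce both the true problem \eqref{eq:slcp-two1} and its truncation \eqref{eq:slcp2_1ep} to finite-dimensional complementarity problems in $x$ alone, and then to compare the two roots through a global error bound. By Proposition \ref{p:solutiony-x}, the true solution $x^*$ is the unique root of $F(x)=\min(x,G(x))$ in \eqref{eq:p2-1}, where $G(x)=(A-\bbe[B(\xi)W(x,\xi)N(\xi)])x-\bbe[B(\xi)W(x,\xi)q_2(\xi)]+q_1$; likewise, substituting the second-stage solution into \eqref{eq:slcp2_1ep} gives \eqref{eq:slcp2_1ep-1}, so that $x_\epsilon$ is the root of $F_\epsilon(x):=\min(x,G_\epsilon(x))$, with $G_\epsilon$ obtained from $G$ by replacing $\bbe$ with $\bbe_{\Xi_\epsilon}$. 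Existence and uniqueness of $x_\epsilon$ follow from applying Proposition \ref{p:solutiony-x} to the truncated problem, whose strong-monotonicity modulus $\bbe_{\Xi_\epsilon}[\kappa(\xi)]$ tends to $\bbe[\kappa(\xi)]>0$ and hence stays positive once $\epsilon\le\epsilon_0$ for $\epsilon_0$ small.

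The first key step is a global error bound for $F$. I would first show that the reduced operator $G$ is strongly monotone with modulus $\kappa_0:=\bbe[\kappa(\xi)]$. Fix $x,z$ and let $y(\cdot)=\bar y(x,\cdot)$, $u(\cdot)=\bar y(z,\cdot)$ be the second-stage solutions. With $s(\xi)=M(\xi)y(\xi)+N(\xi)x+q_2(\xi)\ge0$ and $t(\xi)=M(\xi)u(\xi)+N(\xi)z+q_2(\xi)\ge0$, second-stage complementarity gives $(y-u)^T(M(y-u)+N(x-z))=-y^Tt-u^Ts\le0$ pointwise; subtracting this nonpositive quantity from the full monotonicity inequality \eqref{eq:ABmatrix} integrated over $\Xi$ (as in the proof of Part (i)) leaves exactly $\langle G(x)-G(z),x-z\rangle\ge\kappa_0\|x-z\|^2$. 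Since $F$ is the natural residual of the strongly monotone complementarity problem $0\le x\perp G(x)\ge0$ over $\R^n_+$ and $G$ is globally Lipschitz (Part (ii) and the integrability of $\|B(\xi)\|\,\|N(\xi)\|/\kappa(\xi)$), the standard strongly-monotone residual estimate yields a bound $\|x-x^*\|\le c_0\|F(x)\|$ for all $x$, with $c_0$ depending only on $\kappa_0$ and the Lipschitz modulus of $G$. Alternatively, the uniform nonsingularity of $\partial F$ in Proposition \ref{p:solutiony-x}(iv) gives a global error bound of the same type via the global inverse function theorem.

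The second step is to estimate the residual $\|F(x_\epsilon)\|=\|F(x_\epsilon)-F_\epsilon(x_\epsilon)\|$, using $F_\epsilon(x_\epsilon)=0$. Because $\min(a,\cdot)$ is nonexpansive componentwise, $\|F(x_\epsilon)-F_\epsilon(x_\epsilon)\|\le c_1\|G(x_\epsilon)-G_\epsilon(x_\epsilon)\|$ for a norm-equivalence constant $c_1$, and
\[
G(x_\epsilon)-G_\epsilon(x_\epsilon)=-\int_{\Xi\setminus\Xi_\epsilon}B(\xi)W(x_\epsilon,\xi)N(\xi)P(d\xi)\,x_\epsilon-\int_{\Xi\setminus\Xi_\epsilon}B(\xi)W(x_\epsilon,\xi)q_2(\xi)P(d\xi).
\]
Here $W(x_\epsilon,\xi)=U_{J(x_\epsilon,\xi)}(M(\xi))$ for a measurable index selection, and Lemma \ref{l:matrixschur}(iii) gives the uniform bound $\|W(x_\epsilon,\xi)\|=\|M_J(\xi)^{-1}\|\le1/\kappa(\xi)$. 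Dominating both integrands by the integrable function $\|B(\xi)\|(\|N(\xi)\|+\|q_2(\xi)\|)/\kappa(\xi)$ and choosing $\Xi_\epsilon$ so that its tail integral is at most $\epsilon$ (the sharpened form of \eqref{eq:Xiep}) yields $\|G(x_\epsilon)-G_\epsilon(x_\epsilon)\|\le\epsilon(\|x_\epsilon\|+1)$, whence $\|F(x_\epsilon)\|\le c_1\epsilon(\|x_\epsilon\|+1)$.

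Combining the two steps gives $\|x^*-x_\epsilon\|\le c_0c_1\,\epsilon(\|x_\epsilon\|+1)$, and a short self-bounding argument finishes the proof: choosing $\epsilon_0$ with $c_0c_1\epsilon_0\le\tfrac12$ yields $\|x_\epsilon\|\le2(\|x^*\|+c_0c_1\epsilon_0)=:R$, so that $\|x^*-x_\epsilon\|\le c_0c_1(R+1)\,\epsilon=C\epsilon$. I expect the main obstacle to be the residual estimate: one must control the tail integral \emph{uniformly} in $x_\epsilon$ and over the $\xi$-dependent, measurable choice of active index set $J(x_\epsilon,\xi)$. This is precisely why the uniform operator-norm bound $\|W\|\le1/\kappa(\xi)$ from Lemma \ref{l:matrixschur}(iii) together with an integrable dominating function, rather than the index-by-index statement \eqref{eq:Xiep} as literally written, are the right tools; the strong monotonicity of the reduced operator, inherited through the second-stage elimination, is what keeps the error-bound constant explicit and independent of $\epsilon$.
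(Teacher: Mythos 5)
Your proposal is correct, and its skeleton is the same as the paper's: rewrite both problems as natural-residual equations $F(x)=0$ and $F_\epsilon(x)=0$, bound $\|F(x_\epsilon)\|=\|F(x_\epsilon)-F_\epsilon(x_\epsilon)\|\le \epsilon(1+\|x_\epsilon\|)$ via the tail condition \eqref{eq:Xiep}, convert the residual into a solution error, and close by bounding $\|x_\epsilon\|$ uniformly in $\epsilon$. The differences are in how two sub-steps are justified, and both of your variants are sound. First, for the error bound the paper invokes Proposition \ref{p:solutiony-x}(iv) (uniform nonsingularity of the Clarke generalized Jacobian of $F$) to get $\|x_\epsilon-x^*\|\le\alpha_2\|F(x_\epsilon)-F(x^*)\|$; your primary route instead proves strong monotonicity of the reduced operator $G(x)=Ax+\bbe[B(\xi)\bar y(x,\xi)]+q_1$ directly, by integrating \eqref{eq:ABmatrix} and discarding the pointwise nonpositive term $(y-u)^T\bigl(M(\xi)(y-u)+N(\xi)(x-z)\bigr)\le 0$ supplied by second-stage complementarity — a clean, self-contained argument that bypasses the nonsmooth-Jacobian machinery entirely (at the price of needing $G$ globally Lipschitz, hence integrability of $\|B(\xi)\|\,\|N(\xi)\|/\kappa(\xi)$, which the paper also implicitly assumes). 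Second, the paper bounds $\|x_\epsilon\|\le\alpha_1$ directly from the uniform bound on $\bigl(A-\bbe_{\Xi_\epsilon}[B(\xi)W(x,\xi)N(\xi)]\bigr)^{-1}$ and the boundedness of the constant term, whereas you obtain it by a self-bounding argument after the error estimate; yours is shorter, the paper's gives an explicit a priori ball independent of the error bound. Finally, your remark that \eqref{eq:Xiep} as literally written (fixed selection $J(\xi)$) must be upgraded to a bound uniform over the $x$-dependent measurable selection $J(x_\epsilon,\xi)$ — which the operator-norm bound $\|W(x_\epsilon,\xi)\|\le 1/\kappa(\xi)$ from Lemma \ref{l:matrixschur}(iii) plus an integrable dominating function delivers — is a genuine point of care; the paper applies \eqref{eq:Xiep} to the $x$-dependent integrands without comment, so your version actually tightens that step.
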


\begin{proof}
Note that
conditions \eqref{eq:slcp2_1}   and \eqref{eq:slcp2_1ep-1} can  be rewritten respectively as \eqref{eq:p2-1} and
\bgeqn\label{eq:p2-2}
F_\epsilon(x) := \min\big(x, (A - \bbe_{\Xi_\epsilon}[B(\xi) W(x,\xi)N(\xi)])x  -\bbe_{\Xi_\epsilon}[B(\xi) W(x,\xi)q_2(\xi)] +  q_1\big)=0.
\edeqn
By \eqref{eq:Xiep}, we have
\bgeq
\|F(x)-F_\epsilon(x)\| &\leq & \|( \bbe[B(\xi) W(x,\xi)N(\xi)]-\bbe_{\Xi_\epsilon}[B(\xi) W(x,\xi)N(\xi)])x\|  \\
&&+\|\bbe[B(\xi) W(x,\xi)q_2(\xi)]   -\bbe_{\Xi_\epsilon}[B(\xi) W(x,\xi)q_2(\xi)]\| \\
&\leq& \epsilon(1+\|x\|).
\edeq
Since the solutions of \eqref{eq:p2-1} - \eqref{eq:p2-2} lie in the ball $B(0,\rho)$ of $\R^n$,  we have
$$
\sup_{\|x\|\leq \rho} \|F(x)-F_\epsilon(x)\| \leq  \epsilon(1+\rho).
$$
By Lemma \ref{l:matrixschur} (ii), $A - \bbe[B(\xi) W(x,\xi)N(\xi)]$ is positive definite and
$$
\|(A - \bbe[B(\xi) W(x,\xi)N(\xi)])^{-1}\|\leq \frac{1}{\kappa(\xi)}.
$$
Thus, there exists a positive constant  $\epsilon_0$ such that for all
$
\epsilon\in (0, \epsilon_0],
$
we have
  $$
  A - \bbe_{\Xi_{\epsilon}}[B(\xi) W(x,\xi)N(\xi)]\succ 0
  \;\;\inmat{ and } \;\;
\|(A - \bbe_{\Xi_\epsilon}[B(\xi) W(x,\xi)N(\xi)])^{-1}\|\leq \frac{2}{\kappa(\xi)}.
$$
 Since $\|\bbe_{\Xi_\epsilon}[B(\xi) W(x,\xi)q_2(\xi)]\|$ is also bounded,
there is a positive number $\alpha_1$ such that
$\|x_\epsilon\|\leq \alpha_1$
for all $\epsilon\in(0, \epsilon_0]$.
This enables us to bound  $\rho$ by a positive constant independent of
$x_\epsilon$.

By Proposition \ref{p:solutiony-x} (iv),
there exists a positive constant $\alpha_2$ such that
$$
\|x_\epsilon-x^*\| \leq \alpha_2 \|F(x_\epsilon)-F(x^*)\| = \alpha_2  \|F(x_\epsilon)-F_\epsilon(x_\epsilon)\| \leq
\alpha_2 \epsilon(1+\alpha_1).
$$
We obtain the conclusion by setting
$C=\alpha_2(1+\alpha_1)$.
\hfill $\Box$

The proposition says that the solution obtained from solving (\ref{eq:slcp2_1ep}) is close to the solution of (\ref{eq:slcp-two1})
when $\epsilon$ is set sufficiently small. This means
that we
can
trim off the tail of
the
probability distribution $P$.

\end{proof}

\section{A discretization scheme}

In this section, we
move on to discuss
discretization approaches for the two-stage
SLCP \eqref{eq:slcp-two1}.
The key challenge
is that the second stage of SLCP  \eqref{eq:slcp-two1}
comprises
an infinite number of  complementarity problems  when $\xi$ is continuously distributed.
Our idea here is to divide
the support set $\Xi$ of $\xi$ into small subsets $\{\Xi_i\}$ and
set ${\bf y}_i\equiv y(\xi)$ over each of the subset $\Xi_i$. This will effectively
reduce the infinite number of complementarity problems at the second stage to a finite number.
We then attach a probability to each of the subset and consequently
discretize the probability distribution $P$
and the two-stage SLCP \eqref{eq:slcp-two1}.
Throughout this section, we assume that $\Xi$ is a compact and convex set.

\subsection{Description of the discretization scheme}

 Let $\{\Xi^K_i\}$  be a partition of the support set  $\Xi$, that is, $\Xi^K_i$
 is a compact and convex subset of $\Xi$ such that,
$$
\bigcup_{i=1}^K \Xi^K_i = \Xi, \;\; \inmat{int} \Xi^K_i \cap \inmat{int} \Xi^K_j = \emptyset,
 \;\;\forall \;  i \neq j, \;\; i, j\in\bar{K}, 
$$
where $\inmat{int} S$ denotes the interior of  $S$.
Note that since $\Xi$ is assumed to be a compact set, each $\Xi^K_i$ is also a compact set.
Let
\bgeqn\label{eq:Xi12}
\bbe_{\Xi^K_i}[H(\xi)] = \frac{1}{p_i^K}\int_{\xi\in \Xi^K_i}H(\xi) P(d\xi) \;\; \inmat{with} \;\; p_i^K = P(\Xi^K_i)
\edeqn
for $H(\xi) = M(\xi), N(\xi), B(\xi)$ and $q_2(\xi)$. Let
\bgeqn\label{eq:Delta}
\Delta(\Xi_i^K)= \displaystyle{\max_{\xi_1, \xi_2\in \Xi_i^K}}\|\xi_1- \xi_2\|
\edeqn
denote
 the diameter of $\Xi_i^K$.
  We require $\max_{i\in\bar{K}} \Delta(\Xi_i^K)  \to 0$ as $K\to\infty$.

Let $x\in \R^n$ be fixed.  For $i\in\bar{K}$,
we consider the linear complementarity problem
 \bgeqn\label{eq:cpki}
 0\leq {\bf y}_i \perp \bbe_{\Xi^K_i}[M(\xi)]{\bf y}_i + \bbe_{\Xi^K_i}[N(\xi)]x  + \bbe_{\Xi^K_i}[q_2(\xi)]\geq0.
 \edeqn
 Under Assumption \ref{A-AD},
 $M(\xi)\succ 0$ for all $\xi\in \Xi$ and hence $\bbe_{\Xi^K_i}[M(\xi)] \succ 0$.
 This ensures problem \eqref{eq:cpki} has a unique solution.
 By \cite{XC13} and Lemma \ref{l:matrixschur}, we can write the solution of \eqref{eq:cpki} as
 \begin{equation}\label{eq:yik}
 \bar{{\bf y}}_i^K(x) := - W^K_i(x)(\bbe_{\Xi^K_i}[N(\xi)]x + \bbe_{\Xi^K_i}[q_2(\xi)]),
\end{equation}
where
$$
W^K_i(x) : = [I - D^K_i(x) (I- \bbe_{\Xi^K_i}[M(\xi)])]^{-1}D^K_i(x),
$$
$D^K_i(x)\in \D$ is an $m\times m$-dimension
diagonal matrix with
$$
(D^K_i(x))_{jj} : = \left\{
\begin{array}{ll}
1, & \mbox{ if } (\bbe_{\Xi^K_i}[N(\xi)]x + \bbe_{\Xi^K_i}[M(\xi)]\bar{{\bf y}}^K_i(x) + \bbe_{\Xi^K_i}[q_2(\xi)])_j \leq (\bar{{\bf y}}^K_i(x))_j,\\
0, & \mbox{ otherwise}.
\end{array}
 \right.
$$
Let $\mathbf{1}_A(a)$ denote the indicator function with $\mathbf{1}_A(a)=1$ for $a\in A$ and $0$ otherwise. Let
\bgeqn\label{eq:yk}
\bar y^K(x, \xi) = \sum_{i=1}^K  \bar{{\bf y}}_i^K(x)\mathbf{1}_{\Xi^K_i}(\xi),
\edeqn
that is,
$\bar y^K(x, \xi) = \bar{{\bf y}}_i^K(x), \;\; \forall \xi\in \Xi_i^K.$

We use $\bar y^K(x, \xi)$ as a piecewise
step-like approximation to the true solution $\bar y(x,\xi)$
of
the second stage of SLCP
\eqref{eq:slcp-two1}. Substituting $\bar y^K(x, \xi)$ into the first stage of SLCP  \eqref{eq:slcp-two1},
we obtain
\begin{eqnarray}\label{eq:slcp-two1N-1}
0\leq x \perp A x + \bbe[B(\xi)\bar{y}^K(x, \xi)] +  q_1\geq0.
\end{eqnarray}

\begin{rem}
 Problems \eqref{eq:cpki}$-$\eqref{eq:slcp-two1N-1}   provide a discrete approximation of two-stage SLCP
 \eqref{eq:slcp-two1}. To see this, let $p_i^K:=P(\Xi_i^K)$. Then we can write  \eqref{eq:cpki}$-$\eqref{eq:slcp-two1N-1}  as
\begin{equation}
\label{eq:slcp-two1N}
\left\{
\begin{array}{ll}
0\leq x \perp A x + \sum_{i=1}^Kp_i^K\bbe_{\Xi_i^K}[B(\xi)]{\bf y}_i +  q_1\geq0,\\
0\leq {\bf y}_i \perp \bbe_{\Xi^K_i}[M(\xi)]{\bf y}_i + \bbe_{\Xi^K_i}[N(\xi)]x  + \bbe_{\Xi^K_i}[q_2(\xi)]\geq0,  \; \;\; i
\in \bar{K}.
\end{array}\right.
\end{equation}
 This is a two-stage SLCP with discrete distribution.
 The discretization scheme should be distinguished from the well-known sample average
 approximation scheme where the second stage solution
 is restricted only to the sample points each of which is attached with equal probability.
 Our approach is more accurate by exploiting the information of problem data in each
 set $\Xi_i^K$ albeit at the cost of calculating $p_i^K$.

\end{rem}

\subsection{Qualitative and quantitative convergence analysis}

Let $(x^K, {\bf y}^K)$ denote the solution of
(\ref{eq:slcp-two1N}) whereby we write
${\bf y}^K$ for $({\bf y}_1^K,\cdots,{\bf y}_K^K)$.
Let
\bgeqn
\label{eq:yK-rem-3.1}
 y^K(\xi) = \sum_{i=1}^K  {\bf y}_i^K\mathbf{1}_{\Xi^K_i}(\xi).
\edeqn
We investigate
convergence of $(x^K, y^K(\cdot))$ to $(x^*,y^*(\cdot))$,
the true solution of the two-stage SLCP \eqref{eq:slcp-two1}
  as $\max_{k\in\bar{K}} \Delta(\Xi_i^K)$ goes to zero.
  At this point, it might be helpful to emphasize the difference between
  $y^K(\xi)$ and $\bar{y}^K(x,\xi)$ defined in (\ref{eq:yk}):  the latter depends on each fixed $x$ whereas
the former does not depend on $x$.
Using \eqref{eq:yik} and \eqref{eq:yk}, we have
\bgeqn
\bar{y}^K(x, \xi) = -\sum_{i=1}^K W_i^K(x)(\bbe_{\Xi^K_i}[N(\xi)]x + \bbe_{\Xi^K_i}[q_2(\xi)])  \mathbf{1}_{\Xi^K_i}(\xi).
\edeqn
Substituting the explicit form of $\bar{y}^K(x,\xi)$ above into the first
equation of (\ref{eq:slcp-two1}),
we obtain
\begin{equation}\label{eq:slcp2_1N}
0\leq x \; \perp \; \left(A - \bbe\left[B(\xi) \left(\sum_{i=1}^K\mathbf{1}_{\Xi_i^K}(\xi) W^K_i(x)\bbe_{\Xi_i^K}[N(\xi)]\right)\right]\right)x  - Q^K(x) \geq 0,
\end{equation}
where
$$
Q^K(x) := \bbe\left[B(\xi) \left(\sum_{i=1}^K\mathbf{1}_{\Xi_i^K}(\xi) W^K_i(x)\bbe_{\Xi_i^K}[q_2(\xi)]\right)\right] -  q_1.
$$

The following theorem states
the convergence of $(x^K, y^K( \xi))$
to $(x^*, y^*(\xi))$ as $K\to \infty$.

\begin{thm}\label{t:convergence}
Under
Assumption \ref{A-AD},
 the following assertions hold.
\begin{itemize}
\item [(i)]
The complementarity problem \eqref{eq:slcp-two1N} has a unique solution $(x^K, {\bf y}^K)$.

\item [(ii)] If, in addition,
 $\max_{i\in\bar{K}} \Delta(\Xi_i^K)  \to 0$, then
   $\{(x^K, y^K( \cdot))\}$ is bounded on $\R^n\times \Y$, 
where the boundedness of $y^K( \cdot)$ is in the sense of the norm topology of $\mathcal{L}_1(\Y)$.

\item [(iii)] $\{x^K, y^K( \cdot)\}$ converges to the true solution $(x^*, y^*( \cdot))$ of
problem \eqref{eq:slcp-two1}, where the convergence of $\{y^K( \cdot)\} \to y^*( \cdot) $ is in the sense of the norm topology of $\mathcal{L}_2(\Y)$.
\end{itemize}
\end{thm}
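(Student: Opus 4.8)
The plan is to dispatch the three assertions in turn, with almost all the work concentrated in Part (iii).

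For Part (i) I would observe that the discretized system \eqref{eq:slcp-two1N} is itself a finite-dimensional two-stage SLCP driven by the discrete distribution $\{(\Xi_i^K,p_i^K)\}$, and that it inherits a discrete analogue of Assumption \ref{A-AD}. Concretely, pairing the operator of \eqref{eq:slcp-two1N} against $(z,u_1,\dots,u_K)$ under the probability-weighted inner product $\langle (x,{\bf y}),(z,{\bf u})\rangle = x^Tz+\sum_i p_i^K{\bf y}_i^T{\bf u}_i$ and using $\sum_i p_i^K=1$, one obtains the quadratic form $\sum_{i=1}^K p_i^K\,(z^T,u_i^T)\begin{pmatrix} A & \bbe_{\Xi_i^K}[B]\\ \bbe_{\Xi_i^K}[N] & \bbe_{\Xi_i^K}[M]\end{pmatrix}(z^T,u_i^T)^T$. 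Taking the conditional expectation of \eqref{eq:ABmatrix} over each $\Xi_i^K$ bounds this below by $\sum_i p_i^K\,\bbe_{\Xi_i^K}[\kappa]\,(\|z\|^2+\|u_i\|^2)$, so the discrete problem is strongly monotone and existence and uniqueness of $(x^K,{\bf y}^K)$ follow exactly as in Proposition \ref{p:solutiony-x}(i).

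For Part (ii) I would eliminate the second-stage variables to reach the reduced inclusion \eqref{eq:slcp2_1N}, noting that $\bbe[B(\xi)\mathbf{1}_{\Xi_i^K}(\xi)]=p_i^K\bbe_{\Xi_i^K}[B]$ makes \eqref{eq:slcp2_1N} the genuine Schur reduction of \eqref{eq:slcp-two1N}. The discrete version of Proposition \ref{p:solutiony-x}(iii), obtained by repeating the Schur-complement computation of Lemma \ref{l:matrixschur}(ii) cell by cell, gives that $G^K(x):=A-\bbe[B(\xi)\sum_i\mathbf{1}_{\Xi_i^K}(\xi)W_i^K(x)\bbe_{\Xi_i^K}[N]]=\sum_i p_i^K\big(A-\bbe_{\Xi_i^K}[B]W_i^K(x)\bbe_{\Xi_i^K}[N]\big)$ satisfies $z^TG^K(x)z\geq \bbe[\kappa(\xi)]\|z\|^2$, a bound uniform in $K$ and $x$. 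Since $W_i^K(x)$ is uniformly bounded by Lemma \ref{l:matrixschur}(iii) and $N,q_2,B$ are continuous on the compact set $\Xi$, the term $Q^K(x)$ is uniformly bounded; complementarity then yields $\bbe[\kappa(\xi)]\|x^K\|^2\leq (x^K)^TQ^K(x^K)\leq \|x^K\|\,\|Q^K(x^K)\|$, so $\{x^K\}$ is bounded. Consequently $y^K(\xi)=-\sum_i W_i^K(x^K)(\bbe_{\Xi_i^K}[N]x^K+\bbe_{\Xi_i^K}[q_2])\mathbf{1}_{\Xi_i^K}(\xi)$ is bounded pointwise, whence $\|y^K\|_{\mathcal{L}_1}=\sum_i p_i^K\|{\bf y}_i^K\|$ is bounded.

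For Part (iii) the crux is the uniform convergence of the reduced maps. Writing $F^K(x)=\min(x,\,G^K(x)x-Q^K(x))$ and $F$ as in \eqref{eq:p2-1}, I would prove $\sup_{\|x\|\leq\rho}\|F^K(x)-F(x)\|\to 0$, which reduces to comparing $\bar y^K(x,\xi)$ (solving on $\Xi_i^K$ the LCP with averaged data $\bbe_{\Xi_i^K}[M],\bbe_{\Xi_i^K}[N],\bbe_{\Xi_i^K}[q_2]$) with the true $\bar y(x,\xi)$. Using the Lipschitz dependence of a strongly monotone LCP solution on its data together with the averaging estimate $\|\bbe_{\Xi_i^K}[H]-H(\xi)\|\leq \omega_H(\Delta(\Xi_i^K))$ for $\xi\in\Xi_i^K$, where $\omega_H$ is the modulus of uniform continuity of $H$ on the compact set $\Xi$, I get $\sup_\xi\|\bar y^K(x,\xi)-\bar y(x,\xi)\|\leq L(1+\|x\|)(\omega_M+\omega_N+\omega_{q_2})(\delta_K)$ with $\delta_K=\max_i\Delta(\Xi_i^K)\to 0$; multiplying by $\|B(\xi)\|$ and integrating gives the uniform convergence $F^K\to F$. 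The global error bound furnished by Proposition \ref{p:solutiony-x}(iv), in the form $\|x^K-x^*\|\leq \bar d\,\|F(x^K)-F(x^*)\|=\bar d\,\|F(x^K)-F^K(x^K)\|$, then forces $x^K\to x^*$. For the second-stage convergence I would split $\|y^K(\xi)-y^*(\xi)\|\leq \|\bar y^K(x^K,\xi)-\bar y(x^K,\xi)\|+\|\bar y(x^K,\xi)-\bar y(x^*,\xi)\|$, bounding the first term by the uniform estimate above (with $x=x^K$ bounded) and the second by the global Lipschitz continuity of $\bar y(\cdot,\xi)$ in $x$ from Proposition \ref{p:solutiony-x}(ii); both tend to zero uniformly in $\xi$, so $\|y^K-y^*\|_{\mathcal{L}_2}^2\leq \sup_\xi\|y^K(\xi)-y^*(\xi)\|^2\to 0$.

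The hard part will be the uniform (in $x$ over bounded sets and in $\xi$ over $\Xi$) control of the second-stage error $\bar y^K-\bar y$, which rests on a clean Lipschitz-in-data estimate for the parametric LCP and on converting uniform continuity of $M,N,q_2$ into closeness of cell-averaged data to pointwise data. I must ensure the Lipschitz constant $L$ can be taken uniform over the relevant bounded range of $x$ and over the compact data range, and verify that the discontinuity of the diagonal selector $D(x,\xi)$ (equivalently the jumps of $W(x,\xi)$) does not obstruct the estimate — it does not, since the LCP solution map is Lipschitz in the data even though $D$ is not continuous.
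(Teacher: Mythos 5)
Your proposal is correct, but only Part (i) follows the paper's own route (cell-averaged strong monotonicity of \eqref{eq:slcp-two1N} under the probability-weighted inner product, then the argument of Proposition \ref{p:solutiony-x}(i)); Parts (ii) and (iii) are genuinely different. For Part (ii), the paper does not use your direct cell-by-cell bound: it compares the cell-averaged matrices $\bar{W}_i^K$ with pointwise counterparts $\tilde{W}_i^K(\xi)$, shows the residual $R^K\to 0$ by the dominated convergence theorem (this is exactly where the hypothesis $\max_{i\in\bar{K}}\Delta(\Xi_i^K)\to 0$ enters), and only then extracts the inverse bound $2/\bbe[\kappa(\xi)]$; your observation that averaging \eqref{eq:ABmatrix} over each cell lets Lemma \ref{l:matrixschur}(ii) apply to the averaged data, giving $z^TG^K(x)z\ge\sum_i p_i^K\bbe_{\Xi_i^K}[\kappa(\xi)]\|z\|^2=\bbe[\kappa(\xi)]\|z\|^2$ uniformly in $K$ and $x$, is cleaner, yields the sharper constant, and in fact does not need the vanishing-diameter hypothesis for boundedness at all. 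For Part (iii), the paper argues qualitatively: it extracts a cluster point $\hat{x}$ of $\{x^K\}$, shows for each fixed $\xi$ (using that the cell containing $\xi$ shrinks) that $y^K(\xi)$ converges via the implicit function theorem to a solution $\hat{y}(\xi)$ of the second-stage LCP at $\hat{x}$, passes to the limit in the first stage by dominated convergence, and identifies $(\hat{x},\hat{y})=(x^*,y^*)$ by uniqueness; you instead run a quantitative argument --- uniform convergence $F^K\to F$ on bounded sets via LCP data-perturbation bounds and uniform-continuity moduli of $M,N,q_2,B$ on the compact $\Xi$, combined with the inverse stability of Proposition \ref{p:solutiony-x}(iv) --- which is essentially the paper's proof of Theorem \ref{t-quantitative-sect3} transplanted to the merely-continuous setting, with moduli of continuity replacing the Lipschitz constant of Assumption \ref{A:MNXI}. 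Your route buys convergence of the whole sequence without subsequence extraction, an explicit rate in terms of $\omega(\delta_K)$, and uniform (hence $\mathcal{L}_2$) second-stage convergence; its cost is that it must already invoke the perturbation bound of \cite[Theorem 2.8]{XC06} and the two-point error estimate $\|x^K-x^*\|\le\bar{d}\,\|F(x^K)-F(x^*)\|$, which does not follow from nonsingularity of the elements of $\partial F$ alone --- the paper justifies precisely this step by \cite[Lemma 2.2]{Xu10}, so you should cite it there as well.
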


\begin{proof}
Part (i).
 By Assumption \ref{A-AD},
$$
(z^T, u^T)
\begin{pmatrix}
A &\bbe_{\Xi_i^K}[B(\xi)]\\
\bbe_{\Xi_i^K}[N(\xi)]& \bbe_{\Xi_i^K}[M(\xi)]
\end{pmatrix}
\begin{pmatrix}
z\\
u
\end{pmatrix}\geq \bbe_{\Xi_i^K}[\kappa(\xi)](\|z\|^2 + \|u\|^2).
$$
Analogous to Proposition \ref{p:solutiony-x},
we can demonstrate that
the discretized  two-stage SLCP \eqref{eq:slcp-two1N} has a
 unique solution.

Part (ii). By definition,
$x^K$ is a solution to the first stage of SLCP  \eqref{eq:slcp-two1N} and it is the unique solution for all $K$.
Moreover
$$
W_i^K(x^K) = (I - D^K_i(x^K) (I- \bbe_{\Xi^K_i}[M(\xi)]))^{-1}D^K_i(x^K),
$$
where $D_i^K(x^K)\in \D$, $i\in \bar{K}$.
For any given $
\bar{D}_i
\in \D$,  let
$$
\bar{W}_i^K =  (I - \bar{D}_i (I- \bbe_{\Xi^K_i}[M(\xi)]))^{-1}\bar{D}_i \;\; \inmat{and} \;\;
\tilde{W}_i^K(\xi)=(I - \bar{D}_i (I- M(\xi)))^{-1}\bar{D}_i.
$$
Under Assumption \ref{A-AD},
 $\|\tilde{W}_i^K(\xi)\| =\|U_J(M(\xi))\|\leq \frac{1}{\kappa(\xi)}$ for some subset $J\in \mathscr{J}$.
 Thus $\|\bar{W}_i^K\|\leq \frac{1}{\kappa(\xi)}+1$ for all $K$.
Let
\begin{equation}
R^K
=\bbe \left[ B(\xi) \left(\sum_{i=1}^K\mathbf{1}_{\Xi_i^K}(\xi) \left(\bar{W}^K_i\bbe_{\Xi_i^K}[N(\xi)] -  \tilde{W}_i^K(\xi)N(\xi)\right) \right) \right].
\end{equation}
For $\bar{D}_i = D_i^K(x^K)$,
$x^K$ satisfies
\begin{equation}\label{eq:slcp2_1Nbar}
0\leq x \; \perp \; \left(A - \bbe\left[B(\xi) \left(\sum_{i=1}^K\mathbf{1}_{\Xi_i^K}(\xi) \bar{W}^K_iN(\xi)\right)\right]\right)x  - \bar{Q}^K \geq 0,
\end{equation}
where
$$
\bar{Q}^K = \bbe\left[B(\xi) \left(\sum_{i=1}^K\mathbf{1}_{\Xi_i^K}(\xi) \bar{W}^K_i\bbe_{\Xi_i^K}[q_2(\xi)]\right)\right] -  q_1.
$$
Moreover, \eqref{eq:slcp2_1Nbar} can be written as
\begin{equation}\label{eq:slcp2_1Nt}
0\leq x \; \perp \; \left(A - \bbe\left[B(\xi) \left(\sum_{i=1}^K\mathbf{1}_{\Xi_i^K}(\xi) \tilde{W}^K_i(\xi)N(\xi)\right)\right]+R^K\right)x  - \bar{Q}^K \geq 0.
\end{equation}
Note that for any $\xi\in \Xi_i^K$,
\begin{equation}\label{eq:WNdifference}
\bar{W}^K_i\bbe_{\Xi_i^K}[N(\xi)] -  \tilde{W}_i^K(\xi)N(\xi) = \bar{W}^K_i(\bbe_{\Xi_i^K}[N(\xi)] - N(\xi)) + N(\xi) (\bar{W}^K_i - \tilde{W}_i^K).
\end{equation}
Since $\max_{i\in\bar{K}} \Delta(\Xi_i^K)  \to 0$, and  both $M(\cdot)$ and $N(\cdot)$ are continuous over $\Xi$, we have
$$
\sup_{\xi\in \Xi_i^K}\|M(\xi) - \bbe_{\Xi_i^K}[M(\xi)]\| \to 0 \;\;
\mbox{ and } \;\;
\sup_{\xi\in \Xi_i^K}\|N(\xi) - \bbe_{\Xi_i^K}[N(\xi)]\| \to 0.
$$
Moreover, under Assumption \ref{A-AD},
it follows by Lemma \ref{l:matrixschur},
$\|\bar{W}_i^K\|\leq \frac{1}{\min_{\xi\in\Xi}\kappa(\xi)}$ (note that $\kappa(\xi)$ is positive continuous  over the compact set $\Xi$,
$\min_{\xi\in \Xi} \kappa(\xi)>0$). Thus,
 the first term
 at the right hand side of \eqref{eq:WNdifference}  goes to zero as $K\to\infty$.
  Likewise, we can show that the second term at the right hand side of \eqref{eq:WNdifference}  goes to zero as $K\to\infty$.
Summarizing the discussions above, we
are able to claim,
  by  the Lebesgue Dominated Convergence Theorem that,
$$
\lim_{K\to\infty}\bbe\left[B(\xi) \left(\sum_{i=1}^K\mathbf{1}_{\Xi_i^K}(\xi) \bar{W}^K_i(\xi)N(\xi)\right)\right]  =\bbe\left[B(\xi) \left(\sum_{i=1}^K\mathbf{1}_{\Xi_i^K}(\xi) \tilde{W}^K_i(\xi)N(\xi)\right)\right]
$$
and
$R^K\to 0$ as $K\to \infty$.  By Lemma \ref{l:matrixschur},
$$
x^T\left(A - \bbe\left[B(\xi) \left(\sum_{i=1}^K\mathbf{1}_{\Xi_i^K}(\xi) \bar{W}^K_i\bbe_{\Xi_i^K}[N(\xi)]\right)\right]\right)x\geq \frac{1}{2}\bbe[\kappa(\xi)]\|x\|^2
$$
and
$$
\left\|\left(A - \bbe\left[B(\xi) \left(\sum_{i=1}^K\mathbf{1}_{\Xi_i^K}(\xi) \bar{W}^K_i\bbe_{\Xi_i^K}[N(\xi)]\right)\right]\right)^{-1}\right\|\leq \frac{2}{\bbe[\kappa(\xi)]}
$$
for any $\{\bar{D}_i\}\subset \D$. This entails
\bgeqn
\left\|\left(A - \bbe\left[B(\xi) \left(\sum_{i=1}^K\mathbf{1}_{\Xi_i^K}(\xi) W^K_i(x)\bbe_{\Xi_i^K}[N(\xi)]\right)\right]\right)^{-1}\right\|\leq \frac{2}{\bbe[\kappa(\xi)]}
\label{eq:ABWK}
\edeqn
for all $x\in \R^n_+$.
On the other hand,
the boundedness of
$\bar{Q}^K$ implies that $\|Q^K(x)\|$ is
uniformly bounded
w.r.t. $x$. Together with (\ref{eq:ABWK}),  we are able to claim the boundedness of $\{x^K\}$.
This together with integrable boundedness of $M(\xi)^{-1}$, $N(\xi)$ and $q_2(\xi)$,
ensures that ${\bf y}^K$ is bounded and so is
$y^K(\cdot)$.

Part (iii). Let $\hat{x}$ be a cluster point of the sequence
$\{x^K\}$
and assume without loss of generality that $x^K\to \hat{x}$.
For any fixed $\xi\in \Xi$, there exists $\{i_K\}$ such that $\xi\in\displaystyle{\bigcap_K}\Xi_{i_K}^K$. By the implicit function theorem \cite[Lemma 2.2]{Xu06},
$y^K(\xi)$ converges to $\hat{y}( \xi)$ which satisfies
 \bgeqn\label{eq:cpkhat}
0\leq \hat{y}(\xi) \perp M(\xi)\hat{y}( \xi) + N(\xi)\hat{x}  + q_2(\xi)\geq0, \;\; \inmat{for}\; \; {\rm a.e.} \;\;  \xi\in \Xi.
 \edeqn
Moreover,  since 
 $y^K(\cdot)$ is bounded,  by the Lebesgue Dominated Convergence Theorem,
$$
\lim_{K\to\infty} \bbe\left[B(\xi)
y^K(\xi)
\right] = \bbe\left[\lim_{K\to\infty}B(\xi)
y^K(\xi)
\right] = \bbe[B(\xi)\hat{y}( \xi)].
$$
 Through (\ref{eq:yk}) and (\ref{eq:slcp-two1N-1}), we have
 \begin{eqnarray}\label{eq:slcp-two1N-1a}
0\leq \hat{x} \perp A \hat{x} + \bbe[B(\xi)\hat{y}( \xi)] +  q_1\geq0.
\end{eqnarray}
The  equation above coincides with the first stage of SLCP (\ref{eq:slcp-two1}). Since
the second stage problem
has a unique solution, $(\hat{x},\hat{y}(\xi))$
coincides with $(x^*,y^*(\xi))$ a.e.
The proof is complete. \hfill $\Box$
 \end{proof}

In what follows, we
take a step further to
quantify the discrepancy
between $(x^K, y^K( \xi))$
and $(x^*, y^*(\xi))$ as $K\to\infty$.
For this purpose, we require the underlying coefficient matrices and vectors to be Lipschitz continuous
w.r.t. $\xi$.

\begin{assu} \label{A:MNXI}
$M(\cdot)$, $N(\cdot)$, $q_2(\cdot)$ and $B(\cdot)$ are Lipschitz continuous over a compact set containing  $\Xi$ with  Lipschitz constant   $L$.
\end{assu}

Under  Assumptions \ref{A:MNXI}, it is easy to show that
\bgeqn\label{eq:Hgoto0}
\|H(\xi) - \bbe_{\Xi^K_i}[H(\xi)]\| \leq L\Delta(\Xi_i^K),
\forall \xi\in \Xi_i^K,
\edeqn
for $H(\xi)= M(\xi), N(\xi), B(\xi)$ and $ q_2(\xi)$.

\begin{thm}
\label{t-quantitative-sect3}
Under Assumptions \ref{A-AD} and \ref{A:MNXI}, there exist
  a positive number $\gamma\geq0$ and
   nonnegative integrably bounded functions $c(\xi)$ and $h(\xi)$ such that
\bgeqn
\|x^K - x^*\|
\leq \gamma \bbe[\|B(\xi)\|c(\xi)]L\max_{i\in\bar{K}} \Delta(\Xi_i^K)
\label{eq:xk-x*-quantitative}
\edeqn
and
\bgeqn
\|y^K( \xi) - y^*( \xi)\|
\leq h(\xi)L\max_{i\in\bar{K}}\Delta(\Xi_i^K), \;\; \inmat{for} \;\;\forall \xi\in\Xi. 
\label{eq:yk-quantitative}
\edeqn
\end{thm}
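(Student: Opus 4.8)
The plan is to reduce both error bounds to a single sensitivity estimate for the second-stage LCP and then propagate it through the first-stage reduced equation using the uniform nonsingularity of the Clarke Jacobian established in Proposition \ref{p:solutiony-x}(iv). First I would set up the natural residual maps: write $F(x) = \min(x, Ax + \bbe[B(\xi)\bar y(x,\xi)] + q_1)$ as in \eqref{eq:p2-1}, so that $F(x^*) = 0$, and the analogous discretized residual $F^K(x) = \min(x, Ax + \bbe[B(\xi)\bar y^K(x,\xi)] + q_1)$ coming from \eqref{eq:slcp2_1N}, so that $F^K(x^K) = 0$. By Proposition \ref{p:solutiony-x}(iv), every element of $\partial F$ is nonsingular with $\|V_x^{-1}\| \leq \bar d$ uniformly, which yields the global inverse-Lipschitz estimate $\|x^K - x^*\| \leq \bar d\,\|F(x^K) - F(x^*)\| = \bar d\,\|F(x^K) - F^K(x^K)\|$, exactly as in the proof of the preceding proposition. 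Since $\min$ is componentwise $1$-Lipschitz, the right-hand side is bounded by $\bar d\,\bbe[\|B(\xi)\|\,\|\bar y(x^K,\xi) - \bar y^K(x^K,\xi)\|]$.

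The heart of the argument is the pointwise second-stage estimate: for fixed $x$ and $\xi \in \Xi_i^K$, I must bound $\|\bar y(x,\xi) - \bar{{\bf y}}_i^K(x)\|$, where these solve the LCPs with data $(M(\xi), N(\xi)x + q_2(\xi))$ and $(\bbe_{\Xi_i^K}[M(\xi)], \bbe_{\Xi_i^K}[N(\xi)]x + \bbe_{\Xi_i^K}[q_2(\xi)])$ respectively. Rather than compare the explicit formulas \eqref{eq:soly-o} and \eqref{eq:yik} directly, which is awkward because the active sets $D(x,\xi)$ and $D_i^K(x)$ may differ, I would argue through the natural map $\phi(y;M,b) := \min(y, My+b)$. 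Uniform positive definiteness of $M(\xi)$ (Lemma \ref{l:matrixschur}) makes $\phi(\cdot; M(\xi), \cdot)$ inverse-Lipschitz with a constant $\tilde c(\xi) \leq \max\{1,\|M(\xi)\|_\infty\}/\theta(M(\xi))$, by the same P-matrix argument as in Proposition \ref{p:solutiony-x}(iv). Applying this with $y_1 = \bar y(x,\xi)$ (a zero of $\phi(\cdot;M(\xi),N(\xi)x+q_2(\xi))$) and $y_2 = \bar{{\bf y}}_i^K(x)$, and using that $\bar{{\bf y}}_i^K(x)$ zeroes the discretized residual, gives
$$
\|\bar y(x,\xi) - \bar{{\bf y}}_i^K(x)\| \leq \tilde c(\xi)\big\|(M(\xi) - \bbe_{\Xi_i^K}[M(\xi)])\bar{{\bf y}}_i^K + (N(\xi)-\bbe_{\Xi_i^K}[N(\xi)])x + (q_2(\xi)-\bbe_{\Xi_i^K}[q_2(\xi)])\big\|,
$$
which by \eqref{eq:Hgoto0} is at most $\tilde c(\xi)L\Delta(\Xi_i^K)(\|\bar{{\bf y}}_i^K\| + \|x\| + 1)$. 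Boundedness of $\{x^K\}$, $x^*$ and $\{\bar{{\bf y}}_i^K\}$ from Theorem \ref{t:convergence}(ii) and Proposition \ref{p:solutiony-x}, together with boundedness of $\|M(\xi)\|$ on the compact $\Xi$ under Assumption \ref{A:MNXI}, makes $\tilde c(\xi)$ nonnegative and integrably bounded. Substituting this into the first-stage estimate and absorbing the uniform bounds into $c(\xi) := \tilde c(\xi)(\sup_K\|\bar{{\bf y}}^K\| + \sup_K\|x^K\| + 1)$ and $\gamma := \bar d$ yields \eqref{eq:xk-x*-quantitative}, with $\bbe[\|B(\xi)\|c(\xi)]$ finite since $\|B(\xi)\|$ is bounded on $\Xi$.

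Finally, for the $y$-bound \eqref{eq:yk-quantitative} I would split, for $\xi \in \Xi_i^K$,
$$
y^K(\xi) - y^*(\xi) = \big(\bar{{\bf y}}_i^K(x^K) - \bar y(x^K,\xi)\big) + \big(\bar y(x^K,\xi) - \bar y(x^*,\xi)\big).
$$
The first term is controlled by the second-stage estimate above evaluated at $x = x^K$, hence is $O(\tilde c(\xi)L\Delta(\Xi_i^K))$; the second is bounded by the global Lipschitz modulus of $\bar y(\cdot,\xi)$ from Proposition \ref{p:solutiony-x}(ii) times $\|x^K - x^*\|$, which is itself $O(L\max_{i\in\bar K}\Delta(\Xi_i^K))$ by \eqref{eq:xk-x*-quantitative}. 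Collecting both contributions into a single nonnegative integrably bounded $h(\xi)$ gives the claim. The main obstacle I expect is precisely the pointwise LCP sensitivity step: guaranteeing that the inverse-Lipschitz constant $\tilde c(\xi)$ is genuinely integrably bounded despite the active-set changes between the true and averaged data, which is why I route it through the P-matrix/natural-map estimate of Proposition \ref{p:solutiony-x}(iv) rather than differentiating the $W$-matrices directly.
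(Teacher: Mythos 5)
Your proposal is correct, and its skeleton is the same as the paper's: a pointwise $O\bigl(L\Delta(\Xi_i^K)\bigr)$ sensitivity estimate for the second-stage LCP, propagated through the first-stage natural residual maps $F$, $F^K$ using the uniform nonsingularity of $\partial F$ from Proposition \ref{p:solutiony-x}(iv). The genuine differences are at the lemma level. For the second-stage estimate the paper invokes the P-matrix LCP \emph{perturbation} bound of \cite[Theorem 2.8]{XC06}, which requires fixing $\eta\in(0,1)$ and taking $K$ large enough that $\beta(\xi)\,\|M(\xi)-\bbe_{\Xi_i^K}[M(\xi)]\|\le\eta$, and yields a bound with coefficient $\alpha(\xi)=\beta(\xi)/(1-\eta)$ and the term $\|(-N(\xi)x-q_2(\xi))_+\|$; you instead use the natural-residual error bound for P-matrix LCPs (the same \cite{XC06a} machinery already exploited inside Proposition \ref{p:solutiony-x}(iv)), which holds for every $K$ with no smallness restriction and replaces $\|(-b)_+\|$ by $\|\bar{{\bf y}}_i^K(x)\|$ --- interchangeable here since both quantities are bounded on the compact sets in play, so your route buys a restriction-free estimate at the modest price of citing boundedness of the discrete second-stage solutions, which you correctly source. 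For \eqref{eq:yk-quantitative} the paper applies the perturbation theorem once more, absorbing the first-stage error into the $q$-perturbation (comparing $\bbe_{\Xi_i^K}[N(\xi)]x^K$ against $N(\xi)x^*$ directly), whereas you split by the triangle inequality into a discretization term plus the global Lipschitz modulus of $\bar y(\cdot,\xi)$ from Proposition \ref{p:solutiony-x}(ii) times $\|x^K-x^*\|$; both produce an integrably bounded $h(\xi)$, and your decomposition is arguably more transparent. One caveat, shared with the paper rather than a defect relative to it: the step from uniform invertibility of every $V\in\partial F(x)$ to the global inverse-Lipschitz inequality $\|x^K-x^*\|\le\bar d\,\|F(x^K)-F(x^*)\|$ is not automatic for general Lipschitz maps (convex combinations of nonsingular matrices can be singular), so you should route it explicitly through \cite[Lemma 2.2]{Xu10} as the paper does, rather than present it as an immediate consequence of Proposition \ref{p:solutiony-x}(iv).
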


   \noindent
\textbf{Proof.} We first prove \eqref{eq:xk-x*-quantitative} and proceed it  in two steps.

\noindent
Step 1.  By Theorem \ref{t:convergence},  there exists a compact set $X\subset \R^n$
which encompasses  $\{x^K\}$ and $x^*$. For any fixed $\xi\in\Xi$ and $x\in X$, we consider the second stage complementarity problem
\bgeqn
0\leq y \perp M(\xi)y + N(\xi)x  + q_2(\xi)\geq0.
\label{eq:LCP-y-xi}
\edeqn
As we discussed in the previous section, (\ref{eq:LCP-y-xi}) has a unique solution
$y$ under Assumption \ref{A-AD}. Moreover, we can write (\ref{eq:LCP-y-xi}) equivalently as
$$
\Phi(y,x,\xi) :=\min(y,M(\xi)y + N(\xi)x  + q_2(\xi)) =0.
$$
Under Assumption \ref{A:MNXI}, $\Phi$ is locally Lipschitz continuous
near $(y,x,\xi)$.  By the implicit function theorem
\cite[Lemma 2.2]{Xu06},
\eqref{eq:LCP-y-xi}  has a unique
locally Lipschitz continuous function $\bar{y}(\tilde x,  \tilde \xi)$ such that
$\bar{y}(x,\xi) = y$ and $
 \Phi(\bar{y}(\tilde x, \tilde \xi), \tilde x, \tilde \xi)=0
$
for all
$(\tilde x, \tilde \xi)$ close to $(x,\xi)$.
  Likewise, we can show that there is a unique
locally Lipschitz continuous
$\bar{{\bf y}}^K_{i} (x)$
which satisfies
\bgeqn
0\leq {\bf y}_i \perp \bbe_{\Xi^K_i}[M(\xi)]\mathbf{1}_{\Xi_i^K}(\xi) {\bf y}_i + \bbe_{\Xi^K_i}[N(\xi)]\mathbf{1}_{\Xi_i^K}(\xi) x
+ \bbe_{\Xi^K_i}[q_2(\xi)]\mathbf{1}_{\Xi_i^K}(\xi) \geq0,
\label{eq:yK-Xi-i}
\edeqn
for $\xi\in \Xi_i^K, i\in \bar{K}$.
Note that the solution to (\ref{eq:yK-Xi-i}) may be represented as in (\ref{eq:yik}).

 With \eqref{eq:Hgoto0},
we may regard (\ref{eq:yK-Xi-i})  as a perturbation of \eqref{eq:LCP-y-xi}.
Let $\eta\in (0,1)$ be a constant and
$K$  sufficiently large such that
\bgeq
\bbe_{\Xi_i^K}[M(\xi)]\in
\{Q | \beta(\xi) \|M(\xi)-Q\|\leq \eta\}, \;\;
\inmat{for}\; i\in\bar{K},
\edeq
where
$$
\beta(\xi) :
=\max_{D\in {\cal D}} \|(I- D+DM(\xi))^{-1}D\|.
$$
Let $\alpha(\xi)  = \beta(\xi)/(1-\eta)$.
By \cite[Theorem 2.8]{XC06}, for $\xi \in \Xi^K_i$,
\bgeqn
 \|\bar{{\bf y}}_i^K (x) -  \bar{y}(x, \xi)\| &\leq & \alpha^2(\xi) \|(-N(\xi)x - q_2(\xi))_+\| \|\bbe_{\Xi_i^K}[M(\xi)] - M(\xi)\|
 \nonumber\\
 && + \alpha(\xi) \|\bbe_{\Xi_i^K}[N(\xi)]x - N(\xi)x + \bbe_{\Xi_i^K}[q_2(\xi)] - q_2(\xi)\|
 \nonumber\\
 & \leq & L(\alpha^2(\xi) (\|N(\xi)\|\|x\| + \|q_2(\xi)\|) + \alpha(\xi)(\|x\|+1))\Delta(\Xi_i^K)\nonumber\\
 &=& Lh_1(x, \xi) \Delta(\Xi_i^K),
 \label{eq:yyk}
\edeqn
 where
 $
 h_1(x, \xi) :=\alpha^2(\xi) (\|N(\xi)\|\|x\| + \|q_2(\xi)\|) + \alpha(\xi)(\|x\|+1).
 $

 Under Assumption \ref{A-AD}, $\bbe[h_1(x, \xi)]<\infty$.
 The error bound holds for all $i\in\bar{K}$.

 \noindent
 Step 2.  By substituting $\bar{y}(x,\xi)$ and $\bar{{\bf y}}_i^K(x)$
 into the first equation of
 \eqref{eq:slcp-two1} and \eqref{eq:slcp-two1N} respectively, we have
  $$
 0\leq x \perp A x + \sum_{i=1}^Kp_i^K\bbe_{\Xi_i^K}[B(\xi)\bar{y}(x, \xi)]+  q_1\geq0
 \,\, {\rm  and} \,\,
 0\leq x \perp A x + \sum_{i=1}^Kp_i^K\bbe_{\Xi_i^K}[B(\xi)]\bar{{\bf y}}_i^K(x) +  q_1\geq0.
$$
We write them equivalently as
$$
F(x) := \min\left\{ x, Ax+ \sum_{i=1}^Kp_i^K\bbe_{\Xi_i^K}[B(\xi)\bar{y}(x, \xi)]+  q_1 \right\} = 0
$$
and
$$
F^K(x) : = \min\left\{ x, Ax+ \sum_{i=1}^Kp_i^K\bbe_{\Xi_i^K}[B(\xi)]\bar{{\bf y}}_i^K(x) +  q_1\right\} = 0.
$$
Let $X$ be defined as in Step 1.
Since $\Xi$ is compact and $\bar{y}(x,\xi)$ is continuous over $X\times \Xi$, there is
a positive constant $\sigma$ such that
$\|\bar{y}(x,\xi)\|\leq \sigma$ for all $(x,\xi)\in X\times \Xi$.
Using \eqref{eq:Hgoto0}, we have
\bgeq
\|F(x) - F^K(x)\|
& \leq &
 \left\|\sum_{i=1}^Kp_i^K\bbe_{\Xi_i^K}[B(\xi)\bar{y}(x, \xi)] - \sum_{i=1}^Kp_i^K\bbe_{\Xi_i^K}[B(\xi)]\bar{{\bf y}}_i^K(x)\right\| \\
& \leq &
  \sum_{i=1}^Kp_i^K\bbe_{\Xi_i^K}[\|B(\xi)\|\|\bar{y}(x,\xi)-\bar{{\bf y}}_i^K(x)\|]  \\
  &\leq &  L\max_{i\in\bar{K}}\Delta(\Xi_i^K)\left(
  \sum_{i=1}^Kp_i^K\bbe_{\Xi_i^K}[\|B(\xi)\|h_1(x, \xi)]\right)  \;\;\;  \inmat{(by \ref{eq:yyk})}\\
&\le& L\bbe[
\|B(\xi)\|c(\xi)]\max_{i\in\bar{K}}\Delta(\Xi_i^K),
\edeq
where
$$
c(\xi) :=
 \alpha^2(\xi)(\|N(\xi)\|\|X\| + \|q_2(\xi)\|) + \alpha(\xi)(\|X\|+1)
$$
and
$\|X\|:=\max\{\|x\|: x\in X\}$.
By Proposition \ref{p:solutiony-x} (iv)
and \cite[Lemma 2.2]{Xu10},
\bgeq
\|x^K - x^*\| \leq \gamma
\sup_{x\in X}\|F(x) - F^K(x)\| \leq L\gamma \bbe[\|B(\xi)\|c(\xi)]\max_{i\in\bar{K}}\Delta(\Xi_i^K)
\edeq
for any positive number $\gamma \geq \frac{1}{\bbe[\kappa(\xi)]}$.
 This completes the proof of (\ref{eq:xk-x*-quantitative}).

Next we prove \eqref{eq:yk-quantitative}. Using the established error bound
   (\ref{eq:xk-x*-quantitative}) for the $x$-component of the SLCP solutions, we can again use
 \cite[Theorem 2.8]{XC06} for the $y$-component of the solutions,
 \begin{equation}
 \begin{array}{lll}
\|y^K (\xi) -  y^*(\xi)\| &= & \|\bar{y}^K (x^K, \xi) -  \bar{y}(x^*, \xi)\| \\
&\leq & \alpha^2(\xi) \|(-N(\xi)x^* - q_2(\xi))_+\| \|\bbe_{\Xi_i^K}[M(\xi)] - M(\xi)\| \\
 && + \alpha(\xi) \|\bbe_{\Xi_i^K}[N(\xi)]x^K - N(\xi)x^* + \bbe_{\Xi_i^K}[q_2(\xi)] - q_2(\xi)\|  \\
 & \leq & Lh(\xi)\displaystyle{\max_{i\in\{1, \cdots, K\}}}\Delta(\Xi_i^K),
 \end{array}
 \end{equation}
 where  $
 h(\xi) := \alpha^2(\xi) (\|N(\xi)\|C_0 + \|q_2(\xi)\|) + \alpha(\xi)C_0
 $  with  $C_0 = (\|X\|+1) + \gamma \bbe[\|B(\xi)\|\|c(\xi)\|]$. It is easy to see that $\bbe[h(\xi)]<+\infty$ under Assumption \ref{A-AD}.
\hfill $\Box$

It might be helpful to discuss how the partition of
 $\Xi$ is made.
If $\xi$ is a single random variable, then we may divide the interval of
the support set $\Xi$ evenly into $K$ subintervals. However,
if $\xi$ is a random vector which has several components, then
$K$ might have to be very large in order to reduce the size of $\Xi_i^K$.
In that case, it would be sensible to use Monte Carlo sampling to generate
a set of points $\Xi^K: =\{\xi^1,\cdots,\xi^K\}$ and use them to develop the
 Voronoi partition  of $\Xi$, that is,
	\begin{equation}\label{eq:XiiK}
		\Xi_i^K\subseteq\left\{\xi\in\Xi:\|\xi-\xi^i\|=\min_{k\in\bar{K}} \|\xi-\xi^k\|
		\right\}\quad \text{ for }  \  i\in\bar{K},
	\end{equation}
	are pairwise disjoint subsets forming a partition of $\Xi$.

\subsection{Progressive hedging method  (PHM)}

The discretized two-stage SLCP \eqref{eq:slcp-two1N} is a deterministic LCP which may be solved by any existing solvers.
However, when $K$ is large, it might be more efficient to solve \eqref{eq:slcp-two1N} with the well known  PHM \cite{rw1991} which  exploits the two-stage structure. Note that  
PHM  is an iterative approach which solves finite scenario multi-stage stochastic programming problems at each scenario and then
average  them to get a feasible solution at each iterate.  The main advantage of the approach is that the scenario based solutions can be obtained in parallel computation.
 Recently, Rockafellar and Sun \cite{rs2017} extend the method to finite scenario multi-stage stochastic variational inequalities.
Here we describe how to apply PHM to solve \eqref{eq:slcp-two1N}.

Let $\Xi^K=\{\Xi_1^K,\cdots,\Xi_K^K\}$ and
 $\Omega_i^K= \xi^{-1}(\Xi_i^K)$ for $i\in\bar{K}$.
Let $\tilde{\Omega}= \{\Omega_1^K,\cdots,\Omega_K^K\}$ and $\tilde{\mathscr{B}}$ the  sigma algebra over
$\tilde{\Omega} $. Let $\tilde{P}$ be a probability measure over the measurable space $\{\tilde{\Omega},\tilde{\mathscr{B}}\}$
and $U:\tilde{\Omega}\to 2^{\Xi^K}$ be a random variable (set-valued mapping indeed) with $\tilde{P}(U=\Xi_i^K):=p_i^K$, where $p_i^K$ is defined as in \eqref{eq:Xi12}.
By slightly abusing the notation, we also regard $\tilde{P}$ as a probability measure over $\{\Xi^K,\mathscr{B}^K\}$, where $\mathscr{B}^K$ is Borel sigma algebra over $\Xi^K$, with
$P(\Xi_i^K)=p_i^K$ for $i\in\bar{K}$.
 Let
 $ (x(\cdot), y(\cdot))$ be a measurable mapping from  $\{\Xi^K,\mathscr{B}^K\}$ to
   $ \R^{m+n} $,
  where $x(U) = \sum_{i=1}^Kx_i{\bf 1}_{\Xi^K_i}(U)$ and $y(U) = \sum_{i=1}^Ky_i{\bf 1}_{\Xi^K_i}(U)$. The linear space  $\mathcal{L}$ consisting of all such mappings $z(\cdot)$ from $\Xi^K$ to $\R^{n+m}$ is given with the expectational inner product.
Let $\mathscr{Z}$ denote the space of all measurable functions defined as such.
Define the bilinear product
$$
\langle (x(\cdot), y(\cdot)), (z(\cdot), u(\cdot))  \rangle
=
\bbe_{\tilde P}[(x(U), y(U))^T(z(U), u(U))] =
\sum_{i=1}^Kp_i^K (x_i^Tz_i + y_i^Tu_i),
$$
where $x_i,z_i\in\R^n$ and $y_i,u_i\in\R^m$.
Let $\N$ be the space of
all measurable functions of form $ (x, y(U))$, where $x$ is independent of $U$.
Then we may view $\N$ as a subspace of $\mathscr{Z}$ where the $x$-component is made deterministic (scenario free).
Let
$$
\M = \N^{\perp} = \{w(\cdot) = (w_1(\cdot), w_2(\cdot))\in \mathscr{Z}| \langle (x, y), (w_1, w_2)\rangle =0,\;\; \forall (x, y(\cdot)) \in \N
\}.
$$
Then $w(\cdot) \in \M$ implies that $\langle x,w_1\rangle =\bbe_{\tilde P}[x^Tw_{1}(U)]=0$ and  $\bbe_{\tilde P}[y(U)^Tw_2(U)] =0$ for all $(x, y(\cdot)) \in \N$ and therefore
$\bbe_{\tilde P}[w_{1}(U)] = 0, w_{2i}=0, i\in\bar{K}$.
In what follows, we describe  PHM for  solving \eqref{eq:slcp-two1N}  as follows.

For  $i\in\bar{K}$,
let $\tilde B(\Xi_i^K) =\bbe_{\Xi_i^K}[B(\xi)]$, $\tilde M(\Xi_i^K) = \bbe_{\Xi^K_i}[M(\xi)]$, $\tilde N(\Xi_i^K) = \bbe_{\Xi^K_i}[N(\xi)]$ and $\tilde q_2(\Xi_i^K) = \bbe_{\Xi^K_i}[q_2(\xi)]$. Then the discrete two-stage SLCP \eqref{eq:slcp-two1N} can be reformulated as:
\begin{equation}
\label{eq:slcp-two1NXi}
\left\{
\begin{array}{ll}
0\leq x \perp A x + \bbe_{\tilde P}[\tilde B(U) y(U)] +  q_1\geq0,\\
0\leq  y(U) \perp \tilde M(U) y(U) + \tilde N(U)x  + \tilde q_2(U)\geq0, \;\; U\in \Xi^K.
\end{array}\right.
\end{equation}

\begin{alg} [PHM] Given initial points  $(x^0,y^0)\in \N$ and $w^0\in\M$ with $x^0(U)=\sum_{i=1}^Nx_i^0{\bf 1}_{\Xi_i^K}(U)$, $y^0(U)=\sum_{i=1}^Ny_i^0{\bf 1}_{\Xi_i^K}(U)$ and $w^0(U)=\sum_{i=1}^Nw_i^0{\bf 1}_{\Xi_i^K}(U)$, $i\in\bar{K}$.  Let $r>0$ fixed  and $\nu=0$.

\textbf{Step 1.} For $i\in\bar{K}$, solve the LCP
\begin{equation}
\label{eq:slcp-two1N-PH}\left\{
\begin{array}{ll}
0\leq x_i \perp A x_i + \tilde B(\Xi_i^K)y_i +  q_1 + w^{\nu}_{1i} + r (x_i - x^{\nu}_i) \geq0,  \\
0\leq y_i \perp \tilde M(\Xi_i^K)y_i + \tilde N(\Xi_i^K)x_i  + \tilde q_2(\Xi_i^K) +r (y_i - y^{\nu}_i)  \geq0,
\end{array}\right.
\end{equation}
and obtain a solution $(\hat{x}^{\nu}_i, \hat{y}^{\nu}_i), i\in \bar{K}$.

\textbf{Step 2.}
For $i\in \bar{K}$, let
$$
\bar{x}^{\nu+1}= \sum_{i=1}^K p_i \hat{x}^{\nu}_i, \;\;\,\,
x^{\nu+1}_i=\bar{x}^{\nu+1},\,\, \,
\;\; y^{\nu+1}_i =  \hat{y}^{\nu}_i, \,\,\,\,\, w_{1i}^{\nu+1}  = w_{1i}^{\nu} + r (\hat{x}^{\nu}_i - x^{\nu+1}_i). $$
Set $\nu= \nu+1$, go to Step 1.

\end{alg}

Step 1 solves 
the SLCP per scenario and Step 2 corrects the $x$-component by averaging the obtained scenario based solutions. The $\tilde w_1$ components
serve as auxiliary variables which correspond to multipliers in  PHM \cite{rw1991}.

\section{Distributionally robust formulation of two-stage SLCP}

In this section, we revisit the
two-stage SLCP (\ref{eq:slcp-two1}) by
considering a situation where
 the true probability distribution $P$ is unknown but it is possible to use partial information such as empirical data, computer simulation or  subjective judgements
 to construct an ambiguity set of distributions which contains the true distribution with certain confidence. In such circumstances, it might be sensible to consider
 a robust solution to the two-stage DRLCP \eqref{eq:slcp-two1-1-DRO}.
 In the case that $\CP$ encompasses all probability measures in the support set of $\xi$,
i.e., $\mathscr{P}(\Xi)$, the first stage DRLPC
\eqref{eq:slcp-two1-1-DRO} reduces to
\bgeqn\label{eq:slcpro}
0\leq x \perp A x + B(\xi)y(\xi) +  q_1\geq0, \;\; \forall \xi\in\Xi.
\edeqn
The  system
might not have a solution and consequently one may consider the
 ERM model by replacing the complementarity system with
$$
\min_{x,y(\cdot)} \bbe_P[\|\min(x,A x + B(\xi)y(\xi) +  q_1)\|^2],
$$
where $P$ is any continuous distribution with support set $\Xi$,
see \cite{XF05}.
Our focus here is that $\CP$ is only a subset of $\mathscr{P}(\Xi)$.
We make a blanket assumption that the  two-stage DRLCP \eqref{eq:slcp-two1-1-DRO} has a solution  and discuss 
computational schemes for solving
the problem.
To this end, we write the first stage of (\ref{eq:slcp-two1-1-DRO}) equivalently as
\begin{equation}\label{eq:slcp-two1-1-DRO-r}
\left\{
\begin{array}{ll}
-x \leq  0,\\
- A x - \bbe_P[B(\xi)y(\xi)] - q_1\leq 0, \;\; &\forall P\in \CP,\\
x^T(A x + \bbe_P[B(\xi)y(\xi)] + q_1) \leq 0, \;\; &\forall P\in \CP.
\end{array}
\right.
\end{equation}
It is easy to
verify that the system of inequalities above can be equivalently written as
\begin{equation}
\left\{
\begin{array}{ll}
-x \leq  0,\\
\max_{P\in \CP} [- A x - \bbe_{P}[B(\xi)y(\xi)] - q_1]_i\leq 0, \;\; &\inmat{for}\; i\in\bar{n},
 \\
\max_{P\in \CP}x^T  (A x +\bbe_{P}[B(\xi)y(\xi)] + q_1) = 0,&
\end{array}
\right.
\label{eq:slcp-two1-1-DRO-r-1-b0}
\end{equation}
where we write $[a]_i$ for the $i$-th component of vector $a$.
Observe that under the first and the second equations of \eqref{eq:slcp-two1-1-DRO-r-1-b0},  the third equation of \eqref{eq:slcp-two1-1-DRO-r-1-b0}
is equivalent to
\bgeqn\label{eq:pi}
\sum_{i=1}^n[x]_i \max_{P\in \CP} [A x +\bbe_{P}[B(\xi)y(\xi)] + q_1]_i \leq0.
\edeqn
To see the equivalence, we note that \eqref{eq:pi} implies the third equation of \eqref{eq:slcp-two1-1-DRO-r-1-b0}.
Conversely, by \eqref{eq:slcp-two1-1-DRO-r}, for every $i\in \bar{n}$,
$$
[x]_i [A x +\bbe_{P}[B(\xi)y(\xi)] + q_1]_i\leq0, \;\; \forall P\in \CP,
$$
which implies \eqref{eq:pi}.
Thus system \eqref{eq:slcp-two1-1-DRO-r-1-b0} can be written as
\begin{equation}\label{eq:slcp-two1-1-DRO-r-1}
\left\{
\begin{array}{lll}
-x \leq  0,\\
\displaystyle\max_{P\in \CP} [- A x - \bbe_{P}[B(\xi)y(\xi)] - q_1]_i\leq 0,&  \inmat{for}\; i\in \bar{n},\\
\displaystyle \sum_{i=1}^n[x]_i \max_{P\in \CP} [A x +\bbe_{P}[B(\xi)y(\xi)] + q_1]_i \leq 0. &
\end{array}
\right.
\end{equation}
Note that if $\CP$ is a convex combination of a finite number of known distributions, then
the maximum in $P$ is achieved at the vertices of $\CP$ and consequently, the problem above reduces to the two-stage SLCP \eqref{eq:slcp-two1}.

In what follows, we consider the case when $\CP$ is constructed through moment conditions:
\bgeq
\CP:=\left\{P\in \mathscr{P}:
\begin{array}{ll}
 \bbe_P[\psi_j(\xi)] = b_j & \inmat{for} \; j=1,\cdots, s\\
  \bbe_P[\psi_j(\xi)] \leq b_j& \inmat{for} \; j=s+1,\cdots, t
\end{array}
 \right\},
\edeq
where all  $\psi_i$ are  Lipschitz continuous function of $\xi$ with Lipschitz constants  $L$  defined in Assumption \ref{A:MNXI}.
Our purpose is to
get rid of the maximum operations w.r.t. $P$ in (\ref{eq:slcp-two1-1-DRO-r-1}) when $\CP$ has the specific structure.
To this end, we need to assume  as in the previous sections
that the second stage of DRLCP  (\ref{eq:slcp-two1-1-DRO})
defines a unique solution $\bar{y}(x,\xi)$ for each fixed $x$ and $\xi$.
Substituting $\bar{y}(x,\xi)$ to the second equation of (\ref{eq:slcp-two1-1-DRO-r-1}), we obtain
$$
\max_{P\in \CP} [- A x - \bbe_P[B(\xi)\bar{y}(x,\xi)] - q_1]_i\leq 0, \; \,\,\inmat{for}\; i\in\bar{n}.
$$
We also need to consider
$$
\max_{P\in \CP} [ A x + \bbe_P[B(\xi)\bar{y}(x,\xi)] + q_1]_i, \;\,\, \inmat{for}\; i\in\bar{n}
$$
in the third equation of (\ref{eq:slcp-two1-1-DRO-r-1}).
To ease  the exposition, we consider
\bgeqn
\max_{P\in\CP} \bbe_P[f_i(x,\xi)], \;\; \inmat{for} \; i\in \bar{n},
\label{eq:DRO}
\edeqn
where  $f_i(x,\xi)$ represents the $i$-th component of
$- A x - B(\xi)\bar{y}(x,\xi) - q_1$ when $i\leq n$ and the $(i-n)$-th component of $Ax + B(\xi)\bar{y}(x, \xi) + q_1$ when $i>n$. 
Define the Lagrange function
\bgeqn
L_i(x,\bm{\Lambda}_i,P) :=\int_\Xi f_i(x,\xi)P(d\xi) + \lambda_0 \left(1-\int_\Xi P(d\xi)\right)
+ \sum_{j=1}^t \lambda^i_j \left(b_j-\int_\Xi \psi_j(\xi)P(d\xi)\right),
\edeqn
and
$$
\bar{\Lambda}_i :=\{\lambda^i = (\lambda^i_0,\lambda^i_1,\cdots,\lambda^i_t)^T: \lambda^i_j\geq 0, \;\inmat{for} \; j=s+1,\cdots,t\}, \;\; i\in \bar{n}.
$$
The Lagrange dual of problem (\ref{eq:DRO}) can be written as
\bgeqn
\min_{\lambda^i\in\bar{\Lambda}_i} \max_{P \in \mathscr{M}} L_i(x,\lambda^i,P),
\label{eq:dual}
\edeqn
where $\mathscr{M}$ denotes the set of all positive measures over $\Xi$. 
Conditions for strong duality can be easily established. For instance, we can consider the
following
Slater type condition
\bgeqn
\label{eq:scq-clas-ex}
(1,0_s,0_{t-s})\in\inmat{int}\{(\langle P,1\rangle,\langle P,\psi_E\rangle,\langle P,\psi_I\rangle)+\K_1:P\in\mathscr{M}_+ \},
\edeqn
 where $\K_1:=\{0\}\times\{0_s\}\times\R_{+}^{t-s}$ and $0_{s}$ denotes the zero vector in $\R^s$, $\psi_E= (\psi_1,\cdots,\psi_s)$
and $\psi_I= (\psi_{s+1},\cdots,\psi_t)$. The following proposition
comes straightforwardly from Xu, Liu and Sun \cite[Proposition 2.1]{XLS17}.

\begin{prop} The following assertions hold.
\begin{itemize}

\item[(i)] Condition (\ref{eq:scq-clas-ex}) is equivalent to
\bgeqn
\label{eq:scq-clas-ex1}
(\mu_E,\mu_I)\in\inmat{int}\{(\langle P,\psi_E\rangle,\langle P,\psi_I\rangle)+\K_2:P\in\mathscr{P}(\Xi) \},
\edeqn
where $\K_2:=\{0_{s}\}\times\R_{+}^{t-s}$.

\item[(ii)] Condition (\ref{eq:scq-clas-ex1}) is fulfilled if
\bgeqn
\mu_E\in \inmat{int} \; \{ \langle P, \psi_E(\xi)\rangle: P\in \mathscr{P}(\Xi)\}
\label{eq:slater-eq-XLS}
\edeqn
and there exists $P_E\in \mathscr{P}(\Xi)$ with $\langle P, \psi_E(\xi)\rangle = \mu_E$
such that
\bgeqn
0_{s-t} \in \inmat{int} \; \{ \langle P_E, \psi_I(\xi)\rangle -\mu_I -\R_-^{s-t}\}.
\label{eq:Slater-class-example-a}
\edeqn
In the case when $s=t$, i.e., there is no inequality constraint, condition (\ref{eq:slater-eq-XLS}) coincides
with condition (\ref{eq:scq-clas-ex1}). Likewise, when $s=0$, i.e., there is no
equality constraint, (\ref{eq:Slater-class-example-a})
reduces to existence of $P\in \mathscr{P}(\Xi)$ such that
\bgeqn
0_{t} \in \inmat{int} \; \{ \langle P, \psi_I(\xi)\rangle -\mu_I -\R_-^{t}\},
\label{eq:Slater-class-example-a}
\edeqn
which coincides with (\ref{eq:scq-clas-ex1}).

\item[(iii)] Condition (\ref{eq:slater-eq-XLS}) holds
naturally in the case when
\bgeqn
\{ \langle P, \psi_E(\xi)\rangle: P\in \mathscr{P}(\Xi)\}=\R^s
\label{eq:slater-eq-fullspace}
\edeqn
whereas
condition (\ref{eq:Slater-class-example-a}) is fulfilled
if there exists $P_E\in \mathscr{P}(\Xi)$ with $\langle P, \psi_E(\xi)\rangle = \mu_E$ such that
 \bgeqn
 \langle P_E, \psi_I(\xi)\rangle -\mu_I<0.
 \label{eq:SCQ-st}
 \edeqn
\end{itemize}
\label{p-classical-moments}
\end{prop}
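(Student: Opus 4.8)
The plan is to read the proposition as a concrete specialization of the general constraint-qualification result Xu, Liu and Sun \cite[Proposition 2.1]{XLS17}, so that each assertion reduces to elementary convex geometry of the moment sets. For Part (i) I would exploit the cone structure relating $\mathscr{M}_+$ and $\mathscr{P}(\Xi)$: every nonzero $P\in\mathscr{M}_+$ factors as $P=\lambda Q$ with $\lambda=\langle P,1\rangle>0$ and $Q\in\mathscr{P}(\Xi)$, so the set in \eqref{eq:scq-clas-ex} is exactly the convex cone generated by the lifted set $\{(1,\langle Q,\psi_E\rangle,\langle Q,\psi_I\rangle):Q\in\mathscr{P}(\Xi)\}+\K_1$. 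After translating the moment coordinates by $(\mu_E,\mu_I)$, the point $(1,0_s,0_{t-s})$ lies on the height-one slice $\{\text{first coordinate}=1\}$ of this cone, and a standard homogenization argument shows that an interior point of a convex cone lying on that slice projects to an interior point of its base, and conversely. The only thing to verify carefully is that perturbing the mass coordinate away from $1$ keeps one inside the set, which follows from the positive homogeneity of the cone; this yields the equivalence with \eqref{eq:scq-clas-ex1}.

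For Part (ii) the goal is to exhibit an open neighborhood of $(\mu_E,\mu_I)$ inside $\{(\langle P,\psi_E\rangle,\langle P,\psi_I\rangle)+\K_2:P\in\mathscr{P}(\Xi)\}$, with $\K_2=\{0_s\}\times\R_+^{t-s}$. I would assemble it from two sources of freedom: condition \eqref{eq:slater-eq-XLS} supplies full-dimensional movement of $\langle P,\psi_E\rangle$ in every direction around $\mu_E$, while condition \eqref{eq:Slater-class-example-a}, evaluated at the distinguished $P_E$ with $\langle P_E,\psi_E\rangle=\mu_E$, supplies strict slack in each inequality moment, i.e. a point interior relative to the $\R_+^{t-s}$ factor of $\K_2$. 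Forming convex combinations $(1-\theta)P_E+\theta P$ and using linearity and continuity of the moment maps, I would interpolate these two perturbation families so that the inequality slack contributed by $P_E$ absorbs the small errors introduced when steering the equality moments, thereby covering a genuine neighborhood of $(\mu_E,\mu_I)$. The two special cases are immediate specializations: when $s=t$ the inequality block is empty and only \eqref{eq:slater-eq-XLS} survives, and when $s=0$ the equality block is empty and the requirement collapses to the existence of a single strictly feasible measure.

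Part (iii) is then essentially bookkeeping. Condition \eqref{eq:slater-eq-fullspace} makes the equality-moment set equal to all of $\R^s$, which is open, so $\mu_E$ is trivially interior and \eqref{eq:slater-eq-XLS} holds. For the second claim, any $P_E$ satisfying the strict inequality \eqref{eq:SCQ-st}, i.e. $\langle P_E,\psi_I\rangle-\mu_I\in-\R_{++}^{t-s}$, places $0$ in the interior $\langle P_E,\psi_I\rangle-\mu_I+\R_{++}^{t-s}$ of the translated orthant, which is precisely condition \eqref{eq:Slater-class-example-a}.

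I expect the main obstacle to be Part (ii): simultaneously controlling the equality and inequality directions. Steering $\langle P,\psi_E\rangle$ in an arbitrary direction while preserving the strict inequality slack requires that the convex-combination interpolation not destroy feasibility, and upgrading the resulting neighborhood from relatively open to genuinely open is the delicate point where the full-dimensionality hypothesis \eqref{eq:slater-eq-XLS} is indispensable.
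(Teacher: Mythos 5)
Your proof is essentially correct, but you should know that the paper itself offers no proof of this proposition: it is stated as coming ``straightforwardly from Xu, Liu and Sun [Proposition 2.1]'', i.e.\ the whole argument is delegated to \cite{XLS17}. What you have written is therefore a self-contained reconstruction rather than a variant of the paper's argument, and it is the natural one. In Part (i) your homogenization is exactly right: the set in (\ref{eq:scq-clas-ex}) is a convex cone (the image of the cone $\mathscr{M}_+$ under a linear map, plus the cone $\K_1$); its slice at first coordinate $1$ is precisely the set in (\ref{eq:scq-clas-ex1}), because the first component of $\K_1$ being $\{0\}$ forces $\langle P,1\rangle=1$, i.e.\ $P\in\mathscr{P}(\Xi)$; and for a convex cone a point of the height-one slice is interior to the cone if and only if it is interior to the slice, the nontrivial direction resting on the openness of the cone generated by $\{1\}\times B(z_0,\epsilon)$, which is what your ``positive homogeneity'' remark amounts to. In Part (ii) the interpolation $(1-\theta)P_E+\theta P$ does the job, but you leave implicit the one quantitative ingredient it needs: a uniform bound $\sup_P\|\langle P,\psi_I\rangle\|<\infty$ over the steering measures $P$, so that for small $\theta$ the strict slack $c>0$ of $P_E$ survives the combination while the equality moments sweep out $B(\mu_E,\theta\epsilon)$. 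In this paper's setting the bound is free ($\Xi$ compact, $\psi_j$ Lipschitz); without compactness you can steer with finitely many measures whose equality moments form a simplex around $\mu_E$, which makes the bound automatic. Part (iii) is indeed bookkeeping, though in the $s=0$ case the stated coincidence with (\ref{eq:scq-clas-ex1}) still needs the one-line observation that a single $P$ with $\langle P,\psi_I\rangle<\mu_I$ already yields a full neighborhood of $\mu_I$ inside $\langle P,\psi_I\rangle+\R_+^{t}$. Relative to the paper, your route buys a checkable, elementary argument at the cost of the generality of \cite{XLS17}, whose Proposition 2.1 covers the broader (matrix) moment framework.
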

Throughout this section, we make a blanket assumption that the strong duality holds.

We now return to discuss (\ref{eq:dual}).
Through standard analysis of Lagrange duality (see i.e. discussions at page 308 of \cite{SP09}),
we have
\bgeqn
\max_{P \in \mathscr{M}} L_i(x,\lambda^i,P) 
=  \sum_{j=1}^t \lambda^i_j b_j + \max_{\xi \in \Xi} \left(f_i(x,\xi) -\sum_{j=1}^t \lambda^i_j \psi_j(\xi)\right).
\label{eq:H-minimax}
\edeqn
Consequently, \eqref{eq:slcp-two1-1-DRO-r-1} can be written as
\bgeqn
\label{eq:slcp-two1-1-DRO-r-3-i}
\left\{
\begin{array}{ll}
-x \leq  0, \;
\bm{\Lambda}_1, \bm{\Lambda}_2\in\bar{\Lambda}, &
\\
\left[-Ax-B(\xi)y(\xi) -q_1\right]_i
 - \bm{\Lambda}_1(\psi(\xi)-b)
\leq 0, \; & \forall i\in \bar{n},\; \xi\in \Xi,\\
x^T\left[ (Ax+B(\xi)y(\xi) +q_1)  -  \bm{\Lambda}_2(\psi(\xi) - b)\right]
\leq 0, \;\; & \forall \xi\in \Xi, 
\\
0\leq y(\xi) \perp M(\xi)y(\xi) + N(\xi)x  + q_2(\xi)\geq0, \;\; & \forall \xi\in\Xi,
\end{array}
\right.
\edeqn
where
$
 \bar{\Lambda} :=\{\bm{\Lambda} = (\lambda^1, \cdots, \lambda^n)^T \in \R^{n\times t}: -\lambda_j\leq 0, \inmat{for}\; j=s+1,\cdots,t\}.
 $

We consider the discrete approximation of \eqref{eq:slcp-two1-1-DRO-r-3-i}.
As what we did for the two-stage SLCP in Section 3, our first step is to
develop a discretize approximation  of the infinite inequality system.
At this point, it might be helpful to point out the difference between the two-stage SLCP
and (\ref{eq:slcp-two1-1-DRO-r-3-i}): the former depends on the true probability distribution of
$\xi$ whereas the latter is independent of the probability distribution and it is entirely
determined by the support set $\Xi$. This motivates us to adopt a slightly different discretization approach
by using Monte Carlo sampling.

Let $\{\xi^i\}_{i=1}^K$ be i.i.d samples of $\xi$ generated by
any probability distribution with support set $\Xi$.
We
consider the following
discretization scheme for \eqref{eq:slcp-two1-1-DRO-r-3-i}:
\bgeqn
\label{eq:slcp-two1-1-DRO-r-discrete}
\left\{
\begin{array}{ll}
-x\leq0, \bm{\Lambda}_1, \bm{\Lambda}_2\in \bar{\Lambda}, &\\
 \left(-Ax - B(\xi^i){\bf y}_i -q_1\right) -
\bm{\Lambda}_1(\psi(\xi^i)-b)
\leq 0, \; & i\in \bar{K},\\
 x ^T \left[Ax + B(\xi^i){\bf y}_i +q_1 -
\bm{\Lambda}_2(\psi(\xi^i)-b)\right]
\leq 0, \; & i\in \bar{K},\\
0\leq {\bf y}_i \perp M(\xi^i){\bf y}_i + N(\xi^i)x  + q_2(\xi^i)\geq0, \;\; & i\in \bar{K}.
\end{array}
\right.
\edeqn
Here ${\bf y}_i$ is determined by the second stage LCP at sampled point $\xi^i$.
This is in contrast to (\ref{eq:cpki}) where ${\bf y}_i$ is determined
by the average value of the second stage problem data over set $\Xi_i^K$.
The underlying reason is that the first stage inequality system (the first three inequalities of (\ref{eq:slcp-two1-1-DRO-r-discrete})) here
does not involve any probability distribution.
Of course, (\ref{eq:slcp-two1-1-DRO-r-discrete})
depends on the iid samples and hence the probability distribution which
generates them.
We should also point out that
problem (\ref{eq:slcp-two1-1-DRO-r-discrete}) is a dual formulation of
(\ref{eq:slcp-two1-1-DRO}) with the ambiguity
\bgeq
\CP_K:=\left\{P\in \mathscr{P}(\Xi_K):
\begin{array}{ll}
 \sum_{i=1}^K \psi_j(\xi^i) = b_j & \inmat{for} \; j=1,\cdots, s\\
 \sum_{i=1}^K \psi_j(\xi^i)\leq b_j& \inmat{for} \; j=s+1,\cdots, t
\end{array}
 \right\},
\edeq
where $\Xi_K:=\{\xi^1,\cdots,\xi^K\}$.

Problem (\ref{eq:slcp-two1-1-DRO-r-discrete}) comprises
$Km$
complementarity problems and $K(2n+1)$ inequalities with $(1+2t)n+Km$ variables.
It is easy to see that a solution to the true problem (\ref{eq:slcp-two1-1-DRO-r-3-i}) is also a solution to the discretized problem (\ref{eq:slcp-two1-1-DRO-r-discrete}).
In what follows, we analyze convergence of the latter as $K\to \infty$.

\begin{thm}
  Let $\{(x^K, \bm{\Lambda}_1^K, \bm{\Lambda}_2^K, {\bf y}^K)\}$ be a sequence of
   solutions of
   \eqref{eq:slcp-two1-1-DRO-r-discrete} with different
   size of samples.
Assume: (a)  $\Xi$ is a compact set,
(b) Assumption \ref{A:MNXI} holds, and
(c) the iid samples
$\xi^1,\cdots,\xi^K$  are generated by randomizing $\xi$
and attaching to it with a continuous  probability distribution
$P$ over $\Xi$ such that
\[
P(\|\xi-\xi_0\|\leq \delta) > C \delta^\nu
\]
for any fixed point $\xi_0\in \Xi$  and $\delta\in (0,\delta_0)$, where $C$, $\nu$ and $\delta_0$  are some positive constants.
Then every
 cluster point of
 the sequence
 $\{(x^K, \bm{\Lambda}_1^K, \bm{\Lambda}_2^K)\}$
 is the solution of  \eqref{eq:slcp-two1-1-DRO} w.p.1.
\end{thm}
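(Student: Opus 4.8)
The plan is to fix a convergent subsequence, exploit the almost‑sure density of the samples in $\Xi$, and pass to the limit in each of the finitely many relations of \eqref{eq:slcp-two1-1-DRO-r-discrete} so as to recover the full (infinite) system \eqref{eq:slcp-two1-1-DRO-r-3-i}, which is equivalent to \eqref{eq:slcp-two1-1-DRO}. First I would let $(\hat x,\hat{\bm{\Lambda}}_1,\hat{\bm{\Lambda}}_2)$ be any cluster point and assume, without loss of generality, that $(x^K,\bm{\Lambda}_1^K,\bm{\Lambda}_2^K)\to(\hat x,\hat{\bm{\Lambda}}_1,\hat{\bm{\Lambda}}_2)$. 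Since the subsequence converges, $\{x^K\}$ together with $\hat x$ lies in some compact set $X\subset\R^n$. Under Assumption \ref{A-AD} the second‑stage LCP has a unique solution $\bar y(x,\xi)$ for each $(x,\xi)$, so the discretized iterates satisfy ${\bf y}_i^K=\bar y(x^K,\xi^i)$; moreover, by Proposition \ref{p:solutiony-x}(ii) together with Assumption \ref{A:MNXI} and the implicit‑function and error‑bound arguments already used in the proof of Theorem \ref{t-quantitative-sect3}, $\bar y(\cdot,\cdot)$ is Lipschitz continuous jointly in $(x,\xi)$ on $X\times\Xi$. The sign constraints defining $\bar\Lambda$ are closed, so $\hat{\bm{\Lambda}}_1,\hat{\bm{\Lambda}}_2\in\bar\Lambda$, and $-\hat x\leq0$.

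The crux of the argument, and what I expect to be the main obstacle, is the \emph{almost‑sure density} of the samples, namely showing that, w.p.1,
\[
d_K:=\sup_{\xi\in\Xi}\min_{i\in\bar{K}}\|\xi-\xi^i\|\longrightarrow 0\quad\inmat{as}\ K\to\infty .
\]
This is exactly where hypothesis (c) is indispensable. Since $\Xi$ is compact, I would cover it by finitely many balls of radius $\delta/2$ centred at points $\{\zeta_\ell\}$. The probability that a single draw avoids a fixed ball $B(\zeta_\ell,\delta/2)$ is at most $1-C(\delta/2)^\nu$, so the probability that none of $\xi^1,\dots,\xi^K$ hits it is at most $\big(1-C(\delta/2)^\nu\big)^K$, which is summable in $K$. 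As there are only finitely many balls, the Borel--Cantelli lemma yields that, w.p.1, every ball is eventually hit, whence $d_K\leq\delta$ for all large $K$. Running $\delta=1/n$ over $n\in\bbn$ and intersecting the corresponding probability‑one events gives $d_K\to0$ w.p.1.

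Working on that probability‑one event, I would pass to the limit pointwise. Fix an arbitrary $\xi\in\Xi$ and, for each $K$, choose $i_K\in\bar{K}$ with $\|\xi^{i_K}-\xi\|=\min_{i}\|\xi^i-\xi\|\leq d_K\to0$, so $\xi^{i_K}\to\xi$. The inequality of \eqref{eq:slcp-two1-1-DRO-r-discrete} at index $i_K$ reads
\[
\big(-Ax^K-B(\xi^{i_K})\bar y(x^K,\xi^{i_K})-q_1\big)-\bm{\Lambda}_1^K\big(\psi(\xi^{i_K})-b\big)\leq0 .
\]
Using $x^K\to\hat x$, $\bm{\Lambda}_1^K\to\hat{\bm{\Lambda}}_1$, $\xi^{i_K}\to\xi$, the continuity of $B$ and $\psi$, and the joint continuity of $\bar y$, the left‑hand side converges and I obtain $\big(-A\hat x-B(\xi)\bar y(\hat x,\xi)-q_1\big)-\hat{\bm{\Lambda}}_1\big(\psi(\xi)-b\big)\leq0$; as $\xi\in\Xi$ was arbitrary this holds for all $\xi\in\Xi$. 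The identical passage applied to the third inequality of \eqref{eq:slcp-two1-1-DRO-r-discrete} delivers the corresponding relation with $\hat{\bm{\Lambda}}_2$, and since $\bar y(x^K,\xi^{i_K})\to\bar y(\hat x,\xi)$, continuity of $M,N,q_2$ shows that $\hat y(\xi):=\bar y(\hat x,\xi)$ satisfies the second‑stage complementarity $0\leq\hat y(\xi)\perp M(\xi)\hat y(\xi)+N(\xi)\hat x+q_2(\xi)\geq0$ for every $\xi\in\Xi$.

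Collecting these limits, $(\hat x,\hat{\bm{\Lambda}}_1,\hat{\bm{\Lambda}}_2,\hat y(\cdot))$ satisfies every relation of \eqref{eq:slcp-two1-1-DRO-r-3-i}. Under the blanket strong‑duality assumption of Section 4, system \eqref{eq:slcp-two1-1-DRO-r-3-i} is equivalent to the two‑stage DRLCP \eqref{eq:slcp-two1-1-DRO}, so $(\hat x,\hat y(\cdot))$ is a solution of \eqref{eq:slcp-two1-1-DRO}. Because the entire argument is carried out on an event of probability one, the conclusion holds w.p.1. The routine steps are the continuity‑based limit passages; the genuine work is the covering‑radius estimate of the second paragraph, where the lower density bound (c) converts compactness plus i.i.d.\ sampling into the uniform statement $d_K\to0$ that is needed to transfer the finitely many sampled inequalities to all of $\Xi$.
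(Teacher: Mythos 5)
Your proof is correct, and it reaches the conclusion by a technically different route than the paper. The shared skeleton is the same: fix a convergent subsequence, use hypothesis (c) to get almost-sure density of the samples in $\Xi$, use the (unique, Lipschitz) second-stage solution $\bar y(x,\xi)$ to identify ${\bf y}_i^K=\bar y(x^K,\xi^i)$, and transfer the finitely many sampled inequalities of \eqref{eq:slcp-two1-1-DRO-r-discrete} to the whole of $\Xi$. Where you diverge is in both technical pillars. First, for sample density the paper constructs the Voronoi partition of $\Xi$ centred at $\xi^1,\dots,\xi^K$ and cites \cite[Lemma 3.1]{XLS17} and \cite[Proposition 8]{LPX17} to get $\max_j\Delta_j^K\to0$ \emph{at an exponential rate}; your covering-plus-Borel--Cantelli argument proves the needed qualitative fact $d_K\to0$ w.p.1 from scratch, which is more elementary and self-contained but discards the rate. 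Second, for the limit passage the paper does not argue pointwise: it packages the semi-infinite constraints into the max-functions $F_j(x,\bm{\Lambda}_1,\bm{\Lambda}_2)=\max_{\xi\in\Xi}G_j(\cdot,\xi)$ and their sampled counterparts $F_j^K=\max_{i\in\bar K}G_j(\cdot,\xi^i)$, proves \emph{uniform} convergence $\sup_{\|(x,\bm{\Lambda}_1,\bm{\Lambda}_2)\|\le\rho}\max_j|F_j-F_j^K|\to0$ w.p.1 (with an explicit bound of order $(\rho+1)LC_2\Delta(\Xi_i^K)$), and then invokes the stability result \cite[Lemma 4.2 (i)]{Xu10a} for solution sets of inequality systems under uniform perturbation. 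Your direct argument — take the nearest sample $\xi^{i_K}\to\xi$, use joint continuity of $G$ (via joint Lipschitz continuity of $\bar y$ on $X\times\Xi$), and pass to the limit in each inequality — avoids that external lemma entirely and is perfectly rigorous; what the paper's route buys in exchange is quantitative information (the uniform error bound and exponential decay of cell diameters) that would immediately yield convergence rates, in the same spirit as Theorem \ref{t-quantitative-sect3}, whereas your argument is purely qualitative. Both proofs conclude by identifying the limiting system with \eqref{eq:slcp-two1-1-DRO-r-3-i} and hence, under the section's blanket strong-duality assumption, with \eqref{eq:slcp-two1-1-DRO}.
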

\begin{proof}
By taking a subsequence if necessary, we assume for the simplicity of notation that
 $\{(x^K, \bm{\Lambda}_1^K, \bm{\Lambda}_2^K)\}$ converges to  $(\hat{x}, \hat{\bm{\Lambda}}_1, \hat{\bm{\Lambda}}_2)$.
This means that there exists a positive number $\rho$ such that
  $\|(x^K, \bm{\Lambda}_1^K, \bm{\Lambda}_2^K)\| \leq \rho$ for all $K$.
Let $\Xi_1^K,\cdots, \Xi_K^K$ be the Voronoi partition of $\Xi$ centred at
$\xi^1,\cdots,\xi^K$.
Thus for every $\xi\in \Xi$, there exists a Voronoi cell $\Xi^i_K$ centred at
$\xi^{i_K}$ such that $\xi\in \Xi_i^K$.
Moreover, under condition (c), we can easily use
\cite[Lemma 3.1]{XLS17} to show that
$\max_{j\in\bar{K}}\Delta_j^K\to 0$ at exponential rate as $K\to \infty $. See \cite[Proposition 8]{LPX17}.

On the other hand, under condition (b), we can show,
following  a similar analysis to Step 1 in the proof of Theorem \ref{t-quantitative-sect3} ,
 that there exists a positive constant $C$ such that
\bgeqn
\|\bar{y}(x, \xi^{i_K}) - \bar{y}(x, \xi)\|
\leq  CL
\Delta(\Xi_i^K).
\edeqn
Let
$$
G(x, \bm{\Lambda}_1, \bm{\Lambda}_2, \xi)  = \begin{pmatrix}
- Ax - B(\xi)\bar{y}(x,\xi) -q_1 -
\bm{\Lambda}_1(\psi(\xi)-b) \\
x^T( Ax + B(\xi)\bar{y}(x,\xi) + q_1 -
\bm{\Lambda}_2(\psi(\xi)-b))
\end{pmatrix},
$$
$$
F_j(x, \bm{\Lambda}_1, \bm{\Lambda}_2)= \max_{\xi\in \Xi}
G_j(x, \bm{\Lambda}_1, \bm{\Lambda}_2, \xi),
$$
and
$$
F_j^K(x, \bm{\Lambda}_1, \bm{\Lambda}_2)= \max_{i\in \bar{K}}G_j(x, \bm{\Lambda}_1, \bm{\Lambda}_2, \xi^i), \;\,\,\,\inmat{for}\;
 j\in \overline{n+1}.
$$
Then we can write the first three equations in
 (\ref{eq:slcp-two1-1-DRO-r-3-i})
 equivalently as
\bgeqn
\label{eq:slcp-two1-1-DRO-r-3-ineq}
\left\{
\begin{array}{l}
\bm{\Lambda}_1, \bm{\Lambda}_2\in \bar{\Lambda},
\\
x \geq 0,\\
F_j(x, \bm{\Lambda}_1, \bm{\Lambda}_2) \leq 0,\,\, \;\inmat{for}\;
 j\in \overline{n+1},
\end{array}
\right.
\edeqn
and
\bgeqn
\label{eq:slcp-two1-1-DRO-r-3-ineq-dis}
\left\{
\begin{array}{l}
\bm{\Lambda}_1, \bm{\Lambda}_2\in \bar{\Lambda},
\\
x\geq 0,\\
F_j^K(x, \bm{\Lambda}_1, \bm{\Lambda}_2) \leq 0, \;\,\,\inmat{for}\;
 j\in \overline{n+1}.
\end{array}
\right.
\edeqn
Let
$$
G^K(x, \bm{\Lambda}_1, \bm{\Lambda}_2, \xi) = \sum_{i=1}^K\mathbf{1}_{\Xi_i^K}(\xi)G(x, \bm{\Lambda}_1, \bm{\Lambda}_2, \xi^i).
$$
Then $F_j^K(x, \bm{\Lambda}_1, \bm{\Lambda}_2) = \max_{\xi\in\Xi} G_j^K(x, \bm{\Lambda}_1, \bm{\Lambda}_2, \xi)$.
Under Assumption \ref{A:MNXI}, for any $(x,\bm{\Lambda}_1, \bm{\Lambda}_2)$ such that $\|(x,\bm{\Lambda}_1, \bm{\Lambda}_2)\|\leq \rho$, we have the boundedness of $\bar{y}(x,\xi)$ and then there exists positive constant $C_2$ such that
$$
\begin{array}{lll}
 &&\displaystyle{\max_{i\in\overline{n+1}}}|F_i(x, \bm{\Lambda}_1, \bm{\Lambda}_2) - F_i^K(x, \bm{\Lambda}_1, \bm{\Lambda}_2)|\\
 && \leq    \displaystyle{\max_{i\in\overline{n+1}}\max_{\xi\in \Xi}}  |G_i(x, \bm{\Lambda}_1, \bm{\Lambda}_2, \xi) - G_i^K(x, \bm{\Lambda}_1, \bm{\Lambda}_2, \xi) | \\
 && \leq   \displaystyle{\max_{ i\in\bar{K}, k=1,2} \max_{\xi\in\Xi_{i}^K}}(\rho+1) \|B(\xi)\bar{y}(x,\xi) + \bm{\Lambda}_k \psi(\xi) - B(\xi^k)\bar{y}(x,\xi^i) - \bm{\Lambda}_k \psi(\xi^i) \|\\
&& \leq   (\rho+1)LC_2\Delta(\Xi_i^K).
\end{array}
$$
This shows
$$
 \lim_{K\to\infty} \displaystyle{\sup_{\|(x,\bm{\Lambda}_1, \bm{\Lambda}_2)\|\leq \rho} \max_{k\in\overline{n+1}}}|F_k(x, \bm{\Lambda}_1, \bm{\Lambda}_2) - F_k^K(x, \bm{\Lambda}_1, \bm{\Lambda}_2)| =0,  \;\;\inmat{w.p.1}.
$$
By \cite[Lemma 4.2 (i)]{Xu10a},  any cluster point of the sequence of solutions
$\{(x^K, \bm{\Lambda}_1^K, \bm{\Lambda}_2^K)\}$ obtained from solving  system
(\ref{eq:slcp-two1-1-DRO-r-3-ineq-dis})
is a  solution of \eqref{eq:slcp-two1-1-DRO-r-3-ineq} almost surely.
\hfill$\Box$

\end{proof}

\section{Two-stage  distributionally robust game}

We consider a duopoly market\footnote{The model can be easily extended to an oligopoly, we consider
a duopoly for simplicity of exposition so that we can concentrate on the main ideas.} where two firms compete to supply a
homogeneous product (or service) noncooperatively in future.
Neither of the firms has an existing capacity and thus must
make a decision at the present time on their capacity
for future
supply of quantities in order to allow themselves enough time to
build the necessary facilities.

 The market demand in future is characterized by a random inverse
demand function $p(q,\xi(\omega))$, where $p(q,\xi(\omega))$ is
the market price, $q$ is the total supply to the market, and
$\xi:\Omega\rightarrow\R$ is a continuous random variable.
Specifically, for each realization of the random variable
$\xi:\Omega\rightarrow\R$,  we obtain a different inverse demand
function $p(q,\xi(\omega))$. The uncertainty in the inverse demand
function is then characterized by the distribution of the random
variable $\xi$.

Firm $i$'s cost function for building up capacity $x_i$ is
 $C_i(x_i)$ and the cost of producing (supplying) a quantity of $y_i$ in future is
  $H_i(y_i, \xi)$,  $i=1,2$. Assuming each firm aims to maximize the expected profit,
  we can then develop a mathematical model for their decision making: for $i=1,2$, find $(x^*_i, y^*_i(\cdot))$
  such that it solves the following two-stage stochastic programming problem
\begin{eqnarray}
\begin{array}{cl}
\displaystyle \max_{x_i,y_i(\cdot)}  & \bbe_P[p(y_i(\xi)+y^*_{-i}(\xi),\xi)y_i(\xi) -H_i(y_i(\xi), \xi)] - C_i(x_i)   \\
\mbox{s.t.}                    &0\leq y_i(\xi) \leq x_i,
\end{array}
\label{eq:SNash}
\end{eqnarray}
where the mathematical expectation is taken w.r.t. the distribution of $\xi$ and by convention we write $y_{-i}$
for decision variable of the firm(s) other than $i$.
This is a closed loop two-stage stochastic Nash-Cournot game where each player (firm) needs to make a decision
on capacity   before realization of uncertainty
anticipating  competition in future (second stage).
At this point, we refer readers to Wongrin et al. \cite{WHRCB13}
for a deterministic model with application in electricity markets,
and a more sophisticated two-stage stochastic model by Luna, Sagastiz\'abal and Solodov \cite{LSS}
 where each player is risk-averse and all players share an identical constraint in the second stage.  Similar models  can also be found in Ralph and Smeers \cite{RaS11} for stochastic endogenous equilibrium in asset pricing.
Here we concentrate on reformulation of
problem (\ref{eq:SNash}) as
a two-stage SLCP under some moderate conditions  and investigate
the latter under this particular context.

Let us now consider a situation where each player does not have complete information on the true  probability distribution  $P$. However, each player may use available partial information to construct an ambiguity set of probability distributions, denoted respectively by $\CP_1,\CP_2$. Assuming both players base their decision on the worst probability distribution, then we may consider a distributionally robust game: for $i=1,2$, find $(x_i^*, y_i^*(\cdot))$ such that
\begin{eqnarray}
\begin{array}{lcl}
(x_i^*, y_i^*(\cdot))\in&\displaystyle \arg\max_{x_i,y_i(\cdot)} \min_{P_i \in \CP_i}& \bbe_{P_i}[p(y_i(\xi)+y^*_{-i}(\xi),\xi)y_i(\xi) -H_i(y_i(\xi), \xi)] - C_i(x_i)   \\
&\mbox{s.t.}                    &0\leq y_i(\xi) \leq x_i.
\end{array}
\label{eq:SNash-dro}
\end{eqnarray}
To see the structure of \eqref{eq:SNash-dro} clearly, we write down the optimal decision making problems at the second stage
game after the market demand is observed by both firms, that is,
 for $i=1,2$, find $y^*_i$ such that
\begin{eqnarray}
\begin{array}{lcl}
y_i^*(\xi)\in &\displaystyle \arg\max_{y_i}  & p(y_i+y^*_{-i},\xi)y_i -H_i(y_i, \xi)  \\
&\mbox{s.t.}                    &0\leq y_i \leq x_i,
\end{array}
\label{eq:SNash-second-stg}
\end{eqnarray}
where $x_i$ is fixed. To analyse \eqref{eq:SNash-second-stg}, we need to make some assumption on the cost functions $H_i(y_i, \xi)$ and the inverse demand function $p(q, \xi)$.

\begin{assu} \rm For $i=1,2$, $H_i(y_i, \xi)$ is twice continuously differentiable,
$H_i'(y_i, \xi)\geq 0$ and $H_i''(y_i, \xi)\geq 0$ for $y_i\geq 0$.
\label{acost}
\end{assu}
This assumption is standard. It requires that the production cost function of each firm be
convex and sufficiently smooth, see \cite{Xu05} and references therein.

\begin{assu} \rm The inverse demand function $p(q,\xi)$ satisfies the following conditions.
\begin{itemize}
\item[(i)] $p(q,\xi)$ is twice continuously differentiable
in $q$ and $p_q'(q,\xi)<0$ for $q\geq 0$ and $\xi\in
\Xi$.

\item[(ii)] $p'_q(q,\xi)+qp''_{qq}(q,\xi)\leq 0$, for $q\geq 0$ and $\xi\in
\Xi$.
\end{itemize}
\label{A1}
\end{assu}
This assumption is similar to an assumption used by Sherali,
Soyster and Murphy \cite{mss2} and  De Wolf and Smeers \cite{ws}.
Consider a monopoly market with an extraneous supply $\bar{c}\geq 0$. If
the monopoly's output is $q$, then its revenue at demand scenario
$\epsilon(\omega)=\xi$ is $q(p(q+\bar{c},\xi))$. The marginal revenue is
$p(q+\bar{c},\xi)+qp'_q(q+\bar{c},\xi)$. The rate of change of this marginal
revenue with respect to the increase in the extraneous supply $\bar{c}$
is $p'_q(q+\bar{c},\xi)+qp''_{qq}(q+\bar{c},\xi)$. Assumption \ref{A1} (ii)
implies that this rate is  not positive when $\bar{c}=0$ for any $\xi\in
\Xi$. In other words, any extraneous supply will potentially
reduce the monopoly's marginal revenue in any demand scenario. See
\cite{mss2} for a similar explanation for a deterministic
leader-followers' market. The following result is established by Xu \cite{Xu05}.

\begin{prop} Under Assumption \ref{A1}, the following assertions hold.
\begin{itemize}
\item[(i)] For fixed $\bar{c}\geq 0$,
\bgeqn
p'_q(q+\bar{c},\xi)+qp''_{qq}(q+\bar{c},\xi)\leq 0, \; \mbox{for} \; q\geq 0, \; \xi\in \Xi.
\label{eqpK}
\edeqn
\item[(ii)] $qp(q+\bar{c},\xi)$ is strictly
concave in $q$ for $q\geq 0$, $\xi\in \Xi$.
\end{itemize}
\label{pconcave}
\end{prop}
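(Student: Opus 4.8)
The plan is to establish part (i) first and then obtain part (ii) as an almost immediate consequence by differentiating twice. Throughout, fix $\xi\in\Xi$ and $\bar{c}\geq0$, and abbreviate $t:=q+\bar{c}$, so that $t\geq q\geq0$ whenever $q\geq0$.

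For part (i), the key observation is that Assumption \ref{A1}(ii) supplies the desired inequality only at the ``diagonal'' value where the coefficient of $p''_{qq}$ equals the argument $t$, whereas we need it at the smaller value $q$. I would exploit linearity in that coefficient: consider the affine function $\phi:[0,t]\to\R$ defined by $\phi(s)=p'_q(t,\xi)+s\,p''_{qq}(t,\xi)$. At the left endpoint $\phi(0)=p'_q(t,\xi)<0$ by Assumption \ref{A1}(i), and at the right endpoint $\phi(t)=p'_q(t,\xi)+t\,p''_{qq}(t,\xi)\leq0$ by Assumption \ref{A1}(ii). An affine function that is nonpositive at both endpoints of an interval is nonpositive throughout; since $0\leq q\leq t$, this yields $\phi(q)=p'_q(q+\bar{c},\xi)+q\,p''_{qq}(q+\bar{c},\xi)\leq0$, which is precisely the claim of part (i).

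For part (ii), I would set $g(q):=q\,p(q+\bar{c},\xi)$ and differentiate twice in $q$, using that $p$ is twice continuously differentiable in its first argument (Assumption \ref{A1}(i)). This gives $g'(q)=p(q+\bar{c},\xi)+q\,p'_q(q+\bar{c},\xi)$ and hence $g''(q)=2p'_q(q+\bar{c},\xi)+q\,p''_{qq}(q+\bar{c},\xi)$. Rewriting $g''(q)=p'_q(q+\bar{c},\xi)+\bigl[p'_q(q+\bar{c},\xi)+q\,p''_{qq}(q+\bar{c},\xi)\bigr]$, the bracketed term is nonpositive by part (i) and the leading term is strictly negative by Assumption \ref{A1}(i). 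Therefore $g''(q)<0$ for all $q\geq0$, which establishes strict concavity of $q\mapsto q\,p(q+\bar{c},\xi)$.

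Since the argument is elementary, I do not anticipate a serious obstacle; the only mildly delicate point is the passage from the $\bar{c}=0$ form of Assumption \ref{A1}(ii) to general $\bar{c}\geq0$ in part (i), and the affine-interpolation device above is the cleanest way to handle it, the alternative being a short case analysis on the sign of $p''_{qq}(q+\bar{c},\xi)$. I would also note that the conclusions hold uniformly in $\xi\in\Xi$, since no step uses anything beyond the pointwise hypotheses of Assumption \ref{A1}.
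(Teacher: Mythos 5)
Your proof is correct. Note, however, that the paper itself offers no proof of this proposition at all: it simply states ``The following result is established by Xu \cite{Xu05}'' and defers to that reference, so there is no in-paper argument to compare against. Your self-contained argument is exactly the standard one and is sound on every step: for part (i), the only genuine content is transferring Assumption \ref{A1}(ii), which controls $p'_q(t,\xi)+t\,p''_{qq}(t,\xi)$ at $t=q+\bar c$, down to the smaller coefficient $q\leq t$; your affine-interpolation device (the map $s\mapsto p'_q(t,\xi)+s\,p''_{qq}(t,\xi)$ is affine, nonpositive at $s=0$ by \ref{A1}(i) and at $s=t$ by \ref{A1}(ii), hence nonpositive on $[0,t]$) handles this cleanly and is equivalent to the sign case analysis on $p''_{qq}(t,\xi)$ that the cited reference uses. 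For part (ii), the decomposition $g''(q)=p'_q(q+\bar c,\xi)+\bigl[p'_q(q+\bar c,\xi)+q\,p''_{qq}(q+\bar c,\xi)\bigr]<0$, with the bracket nonpositive by part (i) and the leading term strictly negative by \ref{A1}(i), correctly yields strict concavity on $[0,\infty)$. One could only quibble that the statement asserts strict concavity jointly for all $\bar c\geq 0$ and $\xi\in\Xi$, which your pointwise argument already delivers, as you note.
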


By Proposition \ref{acost}, we know that  problem (\ref{eq:SNash-second-stg}) has a unique optimization solution (we are short of claiming unique equilibrium at this point) for each $i$. 
Moreover, we can write down the Karush-Kuhn-Tucker (KKT) conditions
for \eqref{eq:SNash-second-stg} as follows:
\begin{equation}
0\leq \begin{pmatrix}
y_i\\
\mu_i
\end{pmatrix} \perp
\begin{pmatrix}
-p(y_i+y_{-i},\xi)- y_ip_q'(y_i+y_{-i},\xi)+H_i'(y_i, \xi)  + \mu_i\\
x_i - y_i
\end{pmatrix}\geq0,
\label{eq:KKT-1}
\end{equation}
where  $\mu_i$, $i=1,2$, are Lagrange multipliers of constraints $y_i(\xi)\leq x_i$, $i=1,2$.
Moreover, by Rosen \cite[Theorem 1]{rosen},
 the second stage Nash-Cournot game has an equilibrium which means
 the second stage complementarity problem (\ref{eq:KKT-1}) has a solution. The solution depends on $x_1,x_2$ and $\xi$, we denote it by $\bar{y}(x,\xi)$ and write $x$ for $(x_1, x_2)$.

With the second stage equilibrium $\bar{y}(x,\xi)$, we are ready to write down the first stage
decision making problem for player $i$:
\begin{eqnarray}
\begin{array}{cl}
\displaystyle \max_{x_i,y_i(\cdot)} \min_{P_i \in \CP_i}& \bbe_{P_i}[
v_i(x,\xi)]
  - C_i(x_i)   \\
\mbox{s.t.}                    &
x_i \geq 0,
\end{array}
\label{eq:SNash-dro-stage1}
\end{eqnarray}
where
$$
v_i(x,\xi) := p(\bar{y}_i(x,\xi)+\bar{y}_{-i}(x,\xi),\xi)\bar{y}_i(x,\xi) -H_i(\bar{y}_i(x,\xi), \xi).
$$
A $4$-tuple $(x_1^*,x_2^*, y_1^*(\cdot), y_2^*(\cdot))$ with $(y_1^*(\cdot), y_2^*(\cdot)) = (\bar{y}_1(x^*, \cdot), \bar{y}_2(x^*, \cdot))$ is called a
{\em two-stage distributionally robust equilibrium}
 if  $(x_i^*,x_{-i}^*)$ solves (\ref{eq:SNash-dro-stage1}).
Following Agassi and Bertsimas \cite{AgB06}, we may introduce so-called
{\em ex post equilibrium} $(x^*_i, x^*_{-i})$ which satisfies
\bgeqn\label{eq:vc}
x_i^* \in \arg\max \bbe_{P_i}[
v_i(x_i,x_{-i}^*,\xi)]- C_i(x_i), \quad \forall P_i \in\CP_i, \;\; i=1,2.
\edeqn
It is easy to prove that any ex post equilibrium is a distributionally robust equilibrium.
Assuming that $C_i(x_i)$ is continuously differentiable, we may write down the first order optimality condition
of \eqref{eq:vc}:
\bgeqn
0\in \bbe_{P_i}[
\partial_{x_i} v_i(x,\xi)]- C_i'(x_i) + {\cal N}_{[0,\infty)}(x_i), \quad \forall
P_i \in\CP_i, i=1,2,
\label{eq:stage-Vi}
\edeqn
where ${\cal N}_{[0,\infty)}(x_i)$ denotes the normal cone of interval $[0,+\infty)$ at $x_i$ and
$\partial v_i$ denotes the Clarke subdifferential  of $v_i$ with respect to $x_i$.
Note that under Assumption \ref{A1}, the second stage problem (\ref{eq:SNash-second-stg})
has a unique solution and the set of Lagrange multipliers defined by the
KKT system (\ref{eq:KKT-1}) is a singleton,
it follows by Ralph and Xu \cite[Lemma 5.2]{RaX11} that
$v(x,\xi)$ is continuously differentiable w.r.t. $x_i$ for $x_i>0$ and
\bgeq
\nabla_{x_i}v_i(x,\xi) = \frac{L(y_i(\xi),\lambda_i(\xi),\mu_i(\xi),\x_i)}{dx_i}= \mu_i(\xi),
\edeq
where
$$
L(y_i(\xi),\lambda_i(\xi),\mu_i(\xi),\x_i) := p(y_1(\xi)+y_2(\xi),\xi)y_i(\xi) -H_i(y_i(\xi), \xi) +\lambda_i(\xi)y_i(\xi) - \mu_i(\xi)(y_i(\xi)- x_i).
$$
Consequently,  we can rewrite \eqref{eq:stage-Vi} as
\bgeqn
0\leq x_i\perp \bbe_{P_i}[\mu_i(\xi)]- C_i'(x_i) \geq 0,
 \quad \forall
P_i \in\CP_i,\,\, i=1,2.
\label{eq:stage-NCP}
\edeqn
Let
 $$
 g_i(y_i,y_{-i},\xi) = -p(y_i+y_{-i},\xi)- y_ip_q'(y_i+y_{-i},\xi)+H_i'(y_i, \xi), \;\;  \inmat{for} \;\; i=1,2.
 $$
 Note that in the case when $x_i=0$, $y_i(\xi)\equiv 0$, we have 
$$
\partial_{x_i}v_i(x,\xi)
=\{\mu_i(\xi):  \mu_i(\xi) \geq (-g_i(0, 0,\xi))_+, \forall \xi\in \Xi \;\;\inmat{ and }\;\; \bbe_{P_i}[\mu_i(\xi)] \geq  C_i'(x_i), \; \forall P_i\in \CP_i\}.
$$
Summarizing the discussions above, we can derive the following two-stage ex post complementarity problem
\bgeqn\label{eq:twogame}
\left\{
\begin{array}{ll}
0\leq x_i\perp \bbe_{P_i}[\mu_i(\xi)]- C_i'(x_i) \geq 0,
 \quad &\forall
P_i \in\CP_i, \;\; i=1,2,
\\
0\leq \begin{pmatrix}
y_i(\xi)\\
\mu_i(\xi)
\end{pmatrix} \perp
\begin{pmatrix}
g_i(y_i(\xi),y_{-i}(\xi),\xi)+ \mu_i(\xi)\\
x_i - y_i(\xi)
\end{pmatrix}\geq 0, \;\; &\inmat{for} \;\;P_i\inmat{-a.e.}\;\; \xi\in \Xi,  \;\; i=1,2.
\end{array}
\right.
\edeqn

 \subsection{An example}

To explain how to reformulate a  two-stage duopoly game as a
two-stage ex post complementarity problem (\ref{eq:twogame}), we consider a simple example
where $ p(q, \xi) = a(\xi_1) - b(\xi_1)q + \xi_2,$
 $$
 C_i(x_i) = \alpha_i + \beta_ix_i- \frac{1}{2}\gamma_ix_i^2 \;\;\;\inmat{and}\; H_i(y_i,\xi_1) = s_i(\xi_1)+ \zeta_i(\xi_1) y_i+ \frac{1}{2}\eta_i(\xi_1) y_i^2 \;\;\inmat{for} \;\; i=1,2,
 $$
  where $\xi = (\xi_1, \xi_2)^T$ is a  random vector with support set  $\Xi: = [-1, 1]\times [-1,1]$,  $\xi_1, \xi_2$ are independent,  $a(\xi_1)$, $b(\xi_1)$, $s_i(\xi_1)$, $\zeta_i(\xi_1)$ and $\eta_i(\xi_1)$ map from $[-1,1]$ to $\R_{++}$,  and
 $\alpha_i$, $\beta_i$, $\zeta_i$, $\eta_i$, and $\gamma_i$ are positive constants. The cost functions $C_i(x_i)$, $i=1,2$, are concave which means the marginal cost for capacity set-up
 decreases for both players as capacity increases.
 We assume that the ratio $\beta_i/\gamma_i$ is large
 enough
 so that the marginal capital cost
 does not become negative within any possible capacity that the two players may install.
  To ease the exposition, let
 $$
 A= \begin{pmatrix}
\gamma_1 & 0\\
 0 & \gamma_2
\end{pmatrix}, \;\;  \;\;  B= \begin{pmatrix}
0 & 0 & 1 &0\\
0 & 0 & 0 & 1
\end{pmatrix}, \;\;  \;\;  N= B^T,
$$
$$
\Pi(\xi_1)= \begin{pmatrix}
2b(\xi_1) + \eta_1(\xi_1) & b(\xi_1) \\
b(\xi_1) & 2b(\xi_1) + \eta_2(\xi_1)
\end{pmatrix}
\; \inmat{and} \;\;
M(\xi_1)= \begin{pmatrix}
\Pi(\xi_1) & I_2 \\
-I_2 & 0
\end{pmatrix}.
 $$
 We can write \eqref{eq:twogame}
 in the following matrix-vector form:
\bgeqn {\small
\left\{
\begin{array}{l}
0\leq \begin{pmatrix}
x_1\\
x_2
\end{pmatrix}\perp
A
\begin{pmatrix}
x_1\\
x_2
\end{pmatrix} +
\bbe_{P}\left[
B
\begin{pmatrix}
y_1(\xi),
y_2(\xi),
\mu_1(\xi),
\mu_2(\xi)
\end{pmatrix}^T\right]- \begin{pmatrix}
\beta_1\\
\beta_2
\end{pmatrix} \geq 0, \;\; \forall P\in\CP,
\\
\vspace{2mm}
0\leq \begin{pmatrix}
y_1(\xi)\\
y_2(\xi)\\
\mu_1(\xi)\\
\mu_2(\xi)
\end{pmatrix} \perp
M(\xi_1)
 \begin{pmatrix}
y_1(\xi)\\
y_2(\xi)\\
\mu_1(\xi)\\
\mu_2(\xi)
\end{pmatrix} +
N \begin{pmatrix}
x_1\\
x_2
\end{pmatrix} -
\begin{pmatrix}
a(\xi_1)+\xi_2 - \zeta_1(\xi_1)\\
a(\xi_1)+\xi_2- \zeta_2(\xi_1)\\
0\\
0
\end{pmatrix}
 \geq 0, \quad \; \forall \xi\in \Xi.
\end{array}
\right.
\label{eq:stage-NCP-2-DRO}}
\edeqn
Note that matrix $M(\xi_1)$ is positive semidefinite and nonsingular for all $\xi\in\Xi$.

\subsubsection{Stochastic version}
 In the case when $\CP_i$ reduces to a singleton, that is,
  the true probability distribution,  the two-stage distributionally robust game  \eqref{eq:SNash-dro}
   collapses to two-stage stochastic game  \eqref{eq:SNash}  and consequently \eqref{eq:stage-NCP-2-DRO} can be written as a two-stage SLCP,
\bgeqn {\small
\left\{
\begin{array}{l}
0\leq \begin{pmatrix}
x_1\\
x_2
\end{pmatrix}\perp
A
\begin{pmatrix}
x_1\\
x_2
\end{pmatrix} +
\bbe\left[
B
\begin{pmatrix}
y_1(\xi),
y_2(\xi),
\mu_1(\xi),
\mu_2(\xi)
\end{pmatrix}^T\right]- \begin{pmatrix}
\beta_1\\
\beta_2
\end{pmatrix} \geq 0,
\\
\vspace{2mm}
0\leq \begin{pmatrix}
y_1(\xi)\\
y_2(\xi)\\
\mu_1(\xi)\\
\mu_2(\xi)
\end{pmatrix} \perp
M(\xi_1)
 \begin{pmatrix}
y_1(\xi)\\
y_2(\xi)\\
\mu_1(\xi)\\
\mu_2(\xi)
\end{pmatrix} +
N \begin{pmatrix}
x_1\\
x_2
\end{pmatrix} -
\begin{pmatrix}
a(\xi_1)+\xi_2 - \zeta_1(\xi_1)\\
a(\xi_1)+\xi_2-\zeta_2(\xi_1)\\
0\\
0
\end{pmatrix}
 \geq 0, \forall \xi\in \Xi.
\label{eq:stage-NCP-2}
\end{array}
\right.}
\edeqn

By \cite[Theorem 2]{rosen}, the second stage game  \eqref{eq:SNash-second-stg} has a
unique equilibrium at each scenario $\xi\in \Xi$, which means
that the $(y_1(\xi), y_2(\xi))$-components of the solutions to
the second stage complementarity problem is unique for each $\xi$.
The Lagrange multiplies $\mu_i(\xi)$, $i=1,2$ are also unique
when $x_1, x_2>0$.

\subsubsection{DRLCP version}
In this subsection, we consider the case that the true probability distribution  is unknown
and a distributionally robust game described as in (\ref{eq:SNash-dro}) is played with  the ambiguity set being
 defined through moment conditions $
\CP:=\left\{P\in \mathscr{P}:
 \bbe_P[\xi] = 0
 \right\}.
$

For the simplicity of analysis, we assume that both players use the same ambiguity set, that is $\CP_1=\CP_2=\CP$. Moreover,
we
set $\beta_1=\beta_2=\beta$ and $\zeta_1(\xi_1)=\zeta_2(\xi_1)=1$ in \eqref{eq:stage-NCP-2-DRO}.
Observe that \eqref{eq:stage-NCP-2-DRO} may have a trivial solution with
$x_1=x_2=0$, $y_1(\xi)=y_2(\xi)=0$
for all $\xi\in \Xi $
and
\bgeq
\mu_i(\xi)\geq (a(\xi_1)+\xi_2-1)_+, & \forall \xi\in\Xi  \;\; {\rm and} \;\;
\bbe_P[\mu_i(\xi)] \geq \beta, & \forall P\in\CP, \;\;\inmat{for} \;\; i=1,2.
\edeq

In what follows, we concentrate on non-trivial solutions
with both $x_1$ and $x_2$ being positive.
We endeavour to obtain an analytical solution to (\ref{eq:stage-NCP-2-DRO}).
 For this purpose, we simply assume that  $a$, $b$, $s_i$  are  deterministic positive numbers and $\eta_i(\xi_1)=\bar{\eta}_i+\xi_1$, where $\bar{\eta}_i\ge 1$.

By \eqref{eq:slcp-two1-1-DRO-r-3-i}, the dual formulation of
\eqref{eq:stage-NCP-2-DRO} can be written as
 \bgeqn {\small
\left\{
\begin{array}{l}
x_1, x_2\geq0,\\
\left[-A
\begin{pmatrix}
x_1\\
x_2
\end{pmatrix} -
B
\begin{pmatrix}
y_1(\xi),
y_2(\xi),
\mu_1(\xi),
\mu_2(\xi)
\end{pmatrix}^T+ \begin{pmatrix}
\beta\\
\beta
\end{pmatrix} -\bm{\Lambda}_1
\begin{pmatrix}
\xi_1\\
\xi_2
\end{pmatrix}\right]  \leq 0, \; \forall \xi\in \Xi,\\
 \begin{pmatrix}
x_1\\
x_2
\end{pmatrix}^T
\left[A
\begin{pmatrix}
x_1\\
x_2
\end{pmatrix} +
B
\begin{pmatrix}
y_1(\xi),
y_2(\xi),
\mu_1(\xi),
\mu_2(\xi)
\end{pmatrix}^T- \begin{pmatrix}
\beta\\
\beta
\end{pmatrix} -\bm{\Lambda}_2
\begin{pmatrix}
\xi_1\\
\xi_2
\end{pmatrix}\right]  \leq 0, \; \forall \xi\in \Xi,
\\
\vspace{2mm}
0\leq \begin{pmatrix}
y_1(\xi)\\
y_2(\xi)\\
\mu_1(\xi)\\
\mu_2(\xi)
\end{pmatrix} \perp
M(\xi_1)
 \begin{pmatrix}
y_1(\xi)\\
y_2(\xi)\\
\mu_1(\xi)\\
\mu_2(\xi)
\end{pmatrix} +
N
\begin{pmatrix}
x_1\\
x_2
\end{pmatrix}-
\begin{pmatrix}
a+\xi_2 - 1\\
a+\xi_2-1\\
0\\
0
\end{pmatrix}
 \geq 0, \forall \xi\in \Xi,
\end{array}
\right.
\label{eq:stage-NCP-ex-dro}}
\edeqn
where
$
\bm{\Lambda}_i =
\begin{pmatrix}
\lambda_{11}^i & \lambda_{12}^i\\
\lambda_{21}^i & \lambda_{22}^i
\end{pmatrix}\in \R^{2\times 2}.
$
Since $\Xi$ is a compact set and
the underlying functions are continuous,
problems \eqref{eq:stage-NCP-2-DRO} and \eqref{eq:stage-NCP-ex-dro}
are equivalent in that there is no dual gap in deriving the Lagrange dual of
maximization with respect to $P$, see \cite[page 208]{SP09}.
Let
$
\tilde y(\xi) = (\tilde y_1(\xi), \tilde y_2(\xi))^T = \Pi(\xi_1)^{-1}\begin{pmatrix}  a+\xi_2-1 \\ a+\xi_2-1 \end{pmatrix}$.
Note that $ \Pi(\xi_1)^{-1}$ is positive definite and diagonally dominant, therefore $\tilde y(\xi)>0$ when $a+\xi_2-1>0$. The following example proposes a way to choose $(a,b,\bar{\eta}_1, \bar{\eta}_2,
\gamma_1,\gamma_2, \beta )$ such that
\eqref{eq:stage-NCP-ex-dro} has a solution.

\begin{ex}\rm
Choose $(a,b,\bar{\eta}_1, \bar{\eta}_2,
\gamma_1,\gamma_2, \beta )$ with  $a> 2$ such that $z=\hat{\Pi}^{-1}\begin{pmatrix} a -\beta-1 \\ a -\beta-1 \end{pmatrix} $ satisfies
$0< z_i \leq \inf_{\xi\in \Xi} \tilde y_i(\xi)$,  for $ i=1,2$, where
$$
\hat{\Pi} := \begin{pmatrix}
2b + \bar{\eta}_1 -\gamma_1 & b \\
b & 2b + \bar{\eta}_2-\gamma_2\end{pmatrix}.
$$
Then we can show that
problem \eqref{eq:stage-NCP-ex-dro} has a  solution $(x_1, x_2, y_1(x_1, \cdot), y_2(x_2, \cdot), \mu_1(x, \cdot), \mu_2(x, \cdot), \bm{\Lambda})$ with
$
x_i = y_i(z_i, \xi) = z_i,
$
\begin{equation}\label{eq:mui}
\displaystyle
\mu_i(z, \xi)
= a + \xi_2  -  1 -(2b+\bar{\eta}_i+\xi_1) z_i - b z_{-i} \;\; \inmat{ and } \bm{\Lambda}_1= -\bm{\Lambda}_2 = \begin{pmatrix}
\lambda_{11} & \lambda_{12}\\
\lambda_{21} & \lambda_{22}
\end{pmatrix}
=
\begin{pmatrix}
z_1 & -1\\
z_2 & -1
\end{pmatrix}
\end{equation}
for all $\xi_1, \xi_2\in \Xi$ and $i=1,2$.
\label{p:ex}
\end{ex}

To see this, we  consider second stage complementarity problem of \eqref{eq:stage-NCP-ex-dro} (forth equation of \eqref{eq:stage-NCP-ex-dro}).
It is 
not difficult to verify that when
 $ x_i \leq \tilde y_i(\xi)$, for all $\xi\in \Xi$ and $i=1,2$, the forth
  equation of \eqref{eq:stage-NCP-ex-dro} has
 a solution $y_i(x_i, \xi) = x_i$, for $i=1,2$ with
$$
\begin{array}{lll}
\begin{pmatrix}
\mu_1(x, \xi)\\
\mu_2(x, \xi)
\end{pmatrix}
&=&
-\Pi(\xi_1)
\begin{pmatrix}
x_1\\
x_2
\end{pmatrix}
+
\begin{pmatrix}  a+\xi_2-1 \\ a+\xi_2-1 \end{pmatrix}
=
\Pi(\xi_1)(\tilde y(\xi) -x)\\
&=&
\begin{pmatrix}
a + \xi_2  -  1 -(2b+\bar{\eta}_i+\xi_1) x_1 - b x_{2}\\
a + \xi_2  -  1 -(2b+\bar{\eta}_i+\xi_1) x_1 - b x_{2}
\end{pmatrix}
\geq 0, \;\; \forall \xi\in \Xi.
\end{array} 
$$
In what follows,  we  show that  $(z_1, z_2, y_1(z_1, \xi), y_2(z_2, \xi), \mu_1(z, \xi), \mu_2(z, \xi) )$ satisfies the first 
equation of \eqref{eq:stage-NCP-ex-dro} with
$
\bm{\Lambda}_1=-\bm{\Lambda}_2 = \begin{pmatrix}
z_1 & -1\\
z_2 & -1
\end{pmatrix}.
$
With the explicit formulation of $\mu_i(x, \xi)$ from the second stage of  \eqref{eq:stage-NCP-ex-dro}, the first three equations in  \eqref{eq:stage-NCP-ex-dro} can be rewritten as
\bgeqn\label{eq:firststageDROex}{\small
\left\{
\begin{array}{ll}
x_1, x_2\geq0,\\
\left[-\begin{pmatrix}
\gamma_1 - (2b + \bar{\eta}_1 + \xi_1) & -b\\
-b & \gamma_2 - (2b + \bar{\eta}_2 + \xi_1)
\end{pmatrix}
\begin{pmatrix}
x_1\\
x_2
\end{pmatrix}
-
\begin{pmatrix}
a+\xi_2-\beta-1\\
a+\xi_2-\beta-1
\end{pmatrix} -
\bm{\Lambda}_1
\begin{pmatrix}
\xi_1\\
\xi_2
\end{pmatrix}\right]\leq0\\
\begin{pmatrix}
x_1\\
x_2
\end{pmatrix}^T
\left[\begin{pmatrix}
\gamma_1 - (2b + \bar{\eta}_1 + \xi_1) & -b\\
-b & \gamma_2 - (2b + \bar{\eta}_2 + \xi_1)
\end{pmatrix}
\begin{pmatrix}
x_1\\
x_2
\end{pmatrix}
+
\begin{pmatrix}
a+\xi_2-\beta-1\\
a+\xi_2-\beta-1
\end{pmatrix}
-\bm{\Lambda}_2
\begin{pmatrix}
\xi_1\\
\xi_2
\end{pmatrix}\right]\leq0
\end{array}
\right.}
\edeqn
for all $\xi\in\Xi$.
When $x_1, x_2>0$,
we can obtain a solution to
\eqref{eq:firststageDROex}
by solving
\bgeqn\label{eq:equationfirstex}
\left\{
\begin{array}{lll}
\hat{\Pi}\begin{pmatrix} x_1 \\ x_2 \end{pmatrix} = \begin{pmatrix} a -\beta-1 \\ a -\beta-1 \end{pmatrix},\\
\xi_2 -x_1\xi_1+\lambda_{11}\xi_1+\lambda_{12}\xi_2 =0,  & \forall \xi\in\Xi,\\
\xi_2 -x_2\xi_1+\lambda_{21}\xi_1+\lambda_{22}\xi_2 =0, & \forall \xi\in\Xi,
\end{array}
\right.
\edeqn
 whereby from the  first equation we
 have
 $x= \hat{\Pi}^{-1}\begin{pmatrix} a -\beta-1 \\ a -\beta-1 \end{pmatrix} =z$ and $\lambda_{i1}=z_i$,
    $\lambda_{i2}=-1$ for $i=1,2$ for the second equation.

It is worth noting that it is easy to choose $(a, b, \bar{\eta}_1, \bar{\eta}_2, \gamma_1, \gamma_2, \beta)$ such that $0< z_i \leq \inf_{\xi\in \Xi} \tilde y_i(\xi)$,  for $ i=1,2$. For example, when $\bar{\eta}_i\geq 1$,  $i=1,2$, we have
$$
\begin{array}{lll}
\begin{pmatrix}
\inf_{\xi\in \Xi} \tilde y_1(\xi)\\
\inf_{\xi\in \Xi} \tilde y_2(\xi)
\end{pmatrix}
&=& \Pi(1)^{-1}\begin{pmatrix}  a-2 \\ a-2 \end{pmatrix} = \displaystyle \frac{1}{\det(\Pi(1))}\begin{pmatrix}2b + \bar{\eta}_2+1 & -b \\
-b & 2b + \bar{\eta}_1+1  \end{pmatrix}\begin{pmatrix}  a-2 \\ a-2 \end{pmatrix}\\
& = & \displaystyle\frac{1}{\det(\Pi(1))} \begin{pmatrix}(b + \bar{\eta}_2+1)(a-2) \\
(b + \bar{\eta}_1+1)(a-2)  \end{pmatrix}
\end{array}
$$
and
$${\small
z=   \frac{1}{\det(\hat{\Pi})}\begin{pmatrix}2b + \bar{\eta}_2 - \gamma_2 & -b \\
-b & 2b + \bar{\eta}_1-\gamma_1  \end{pmatrix}\begin{pmatrix}  a-\beta-1 \\ a-\beta-1 \end{pmatrix}
 = \displaystyle\frac{1}{\det(\hat{\Pi})} \begin{pmatrix}(b + \bar{\eta}_2-\gamma_2)(a-\beta-1) \\
(b + \bar{\eta}_1-\gamma_1)(a-\beta-1)  \end{pmatrix}.}
$$
Moreover, when
\begin{equation}\label{eq:conditionex}
\frac{(a-2)(\bar \eta_{i}+1+b)}{(a-\beta-1)(\bar \eta_{i} + b -\gamma_{i})} \geq \frac{\det(\Pi(1))}{\det(\hat{\Pi})}, \;\; \inmat{ for } i=1,2,
\end{equation}
we have $z_i\leq \inf_{\xi\in \Xi} \tilde y_i(\xi)$, $i=1,2$.

\subsubsection{Numerical Tests}

We now move on to verify the discretization scheme discussed in Section 4
for \eqref{eq:stage-NCP-ex-dro}.
To this end, we set particular values for the underlying parameters in the table below:
\begin{center}
\begin{tabular}{|c |c c c c c c cccccccc| }
 \hline
 Parameters & $\alpha_1$ & $\beta_1$ & $\gamma_1$ & $s_1$ & $\zeta_1$ & $\bar \eta_1$ & $a$ &  $\alpha_2$ & $\beta_2$ & $\gamma_2$ & $s_2$ & $\zeta_2$ & $\bar \eta_2 $ & $b$ \\
 \hline
Values & 3 & 5 & 1 & 2 & 1 & 1 & 10 & 3 & 5 & 0.5 & 2 & 1 & 2 & 5 \\
  \hline
\end{tabular}
\end{center}
Let $\Xi_K:=\{\xi^1, \cdots, \xi^K\}$ be i.i.d. samples of $\xi$, where $\xi=(\xi_1, \xi_2)$,
$\xi_i$ follow truncated normal distribution over $[-1,1]$
 which is constructed from normal distribution with mean $0$ and
 standard deviation $\sigma$ independently, $i=1,2$.
 We carry out numerical experiments with  different values of $\sigma$. 
 With the specified parameter values,
condition \eqref{eq:conditionex}
is satisfied and 
consequently all conditions in Example \ref{p:ex} are fulfilled. 
This means 
we are able to obtain 
a solution of \eqref{eq:stage-NCP-ex-dro} with
$x_1  = y_1(\xi) = 0.2844, x_2 = y_2(\xi) = 0.2222$ and
$$
\mu_1(\xi) = 4.7111+\xi_2-0.2844\xi_1, \;\; \inmat{and} \;\; \mu_2(\xi) = 4.8889 +\xi_2 -0.2222\xi_1,  \;\;\forall \xi\in \Xi.
$$

Let us now apply the discrete 
scheme to \eqref{eq:stage-NCP-ex-dro}.
 The solution of the
 discretized problem is
$x^K_1  = ({\bf y}^K_{j})_1= 0.2844, x^K_2 = ({\bf y}^K_{j})_2 = 0.2222$ and
$$
\mu^K_1(\xi^j) = 4.7111+\xi^j_2-0.2844\xi^j_1, \;\; \inmat{and} \;\; \mu^K_2(\xi^j) = 4.8889 +\xi^j_2 -0.2222\xi^j_1,  \;\; \inmat{for} \;\;  j\in\bar{K}.
$$
Since $x^K=x$ and $y^K(\xi)=y(\xi)$ for all $\xi\in\Xi$, where $y^K(\xi) = \sum_{j=1}^K {\bf y}^K_j {\bf 1}_{\Xi_j^K}(\xi) $, $\Xi_j^K$ is defined by the  Voronoi
 partition in \eqref{eq:XiiK}. We only need to 
 investigate
 the error of $\mu^K(\xi^j)$, for $j\in\bar{K}$.
Define the errors of two components of  $\mu^K$
obtained
from the
discretized
problem by
$$
\inmat{error}_i^K = \bbe[|\mu_i(\xi) - \bar{\mu}_i^K(\xi)|],
\;\;
\inmat{where}
\;\;
\bar{\mu}^K(\xi) = \sum_{j=1}^K \mu^K(\xi^j) {\bf 1}_{\Xi_j^K}(\xi) , \;\; \inmat{for} \;\;  i=1,2,
$$
and the mathematical expectation is taken with respect to
the distribution of $\xi$ (truncated normal distribution).

  Note that it is not easy to calculate error$_i^K$ directly.
  Therefore we propose to use sample average approximation method to
  estimate  the quantity, that is, generate iid samples $\hat{\xi}^1,\cdots,\hat{\xi}^N$
  with sample size $N=5000$,
  and calculate
 $$
\inmat{error}^K_{i} \approx \inmat{error}^K_{iN} = \frac{1}{N}\sum_{k=1}^N|\mu_i(\hat{\xi}^k) - \bar{\mu}_i^K(\hat{\xi}^k)|, \;\;
\inmat{for}\; i=1,2.
$$
Here we are using notation $\hat{\xi}^k$ to distinguish the samples from those in $\Xi_K$.
We carried out tests with
sample sizes $K=5, 10, 20, 40, 60, 100$ and the standard deviation
$\sigma=0.1, 0.5, 1, 10$ of the normal distribution. For
each fixed
$K$ and $\sigma$,
 we generate
 $K$ samples $\Xi_K$, calculate the error$_{iN}^K$ $100$ times and  average  them.
 Figures \ref{fig:side:a}-\ref{fig:side:b} depict
 the decreasing tendency of error$_{iN}^K$ as $K$ increases and $\sigma$ decreases.
\begin{figure}[!htb]
\begin{minipage}[t]{0.5\textwidth}
\centering
\includegraphics[width=3.2in, height=2.2in]{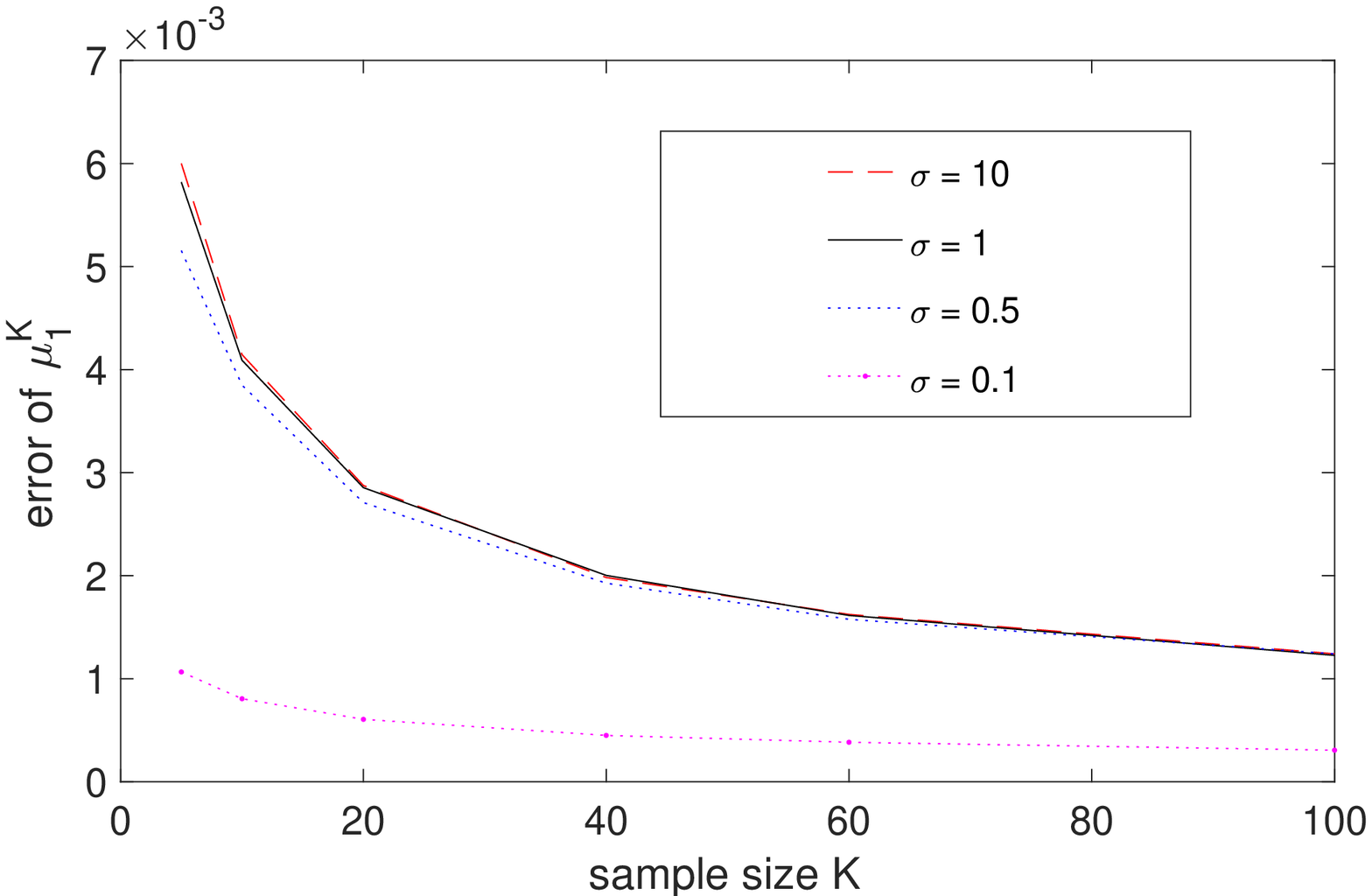}
\caption{error of $\mu_1^K$ }
\label{fig:side:a}
\end{minipage}%
\begin{minipage}[t]{0.5\textwidth}
\centering
\includegraphics[width=3.2in, height=2.2in]{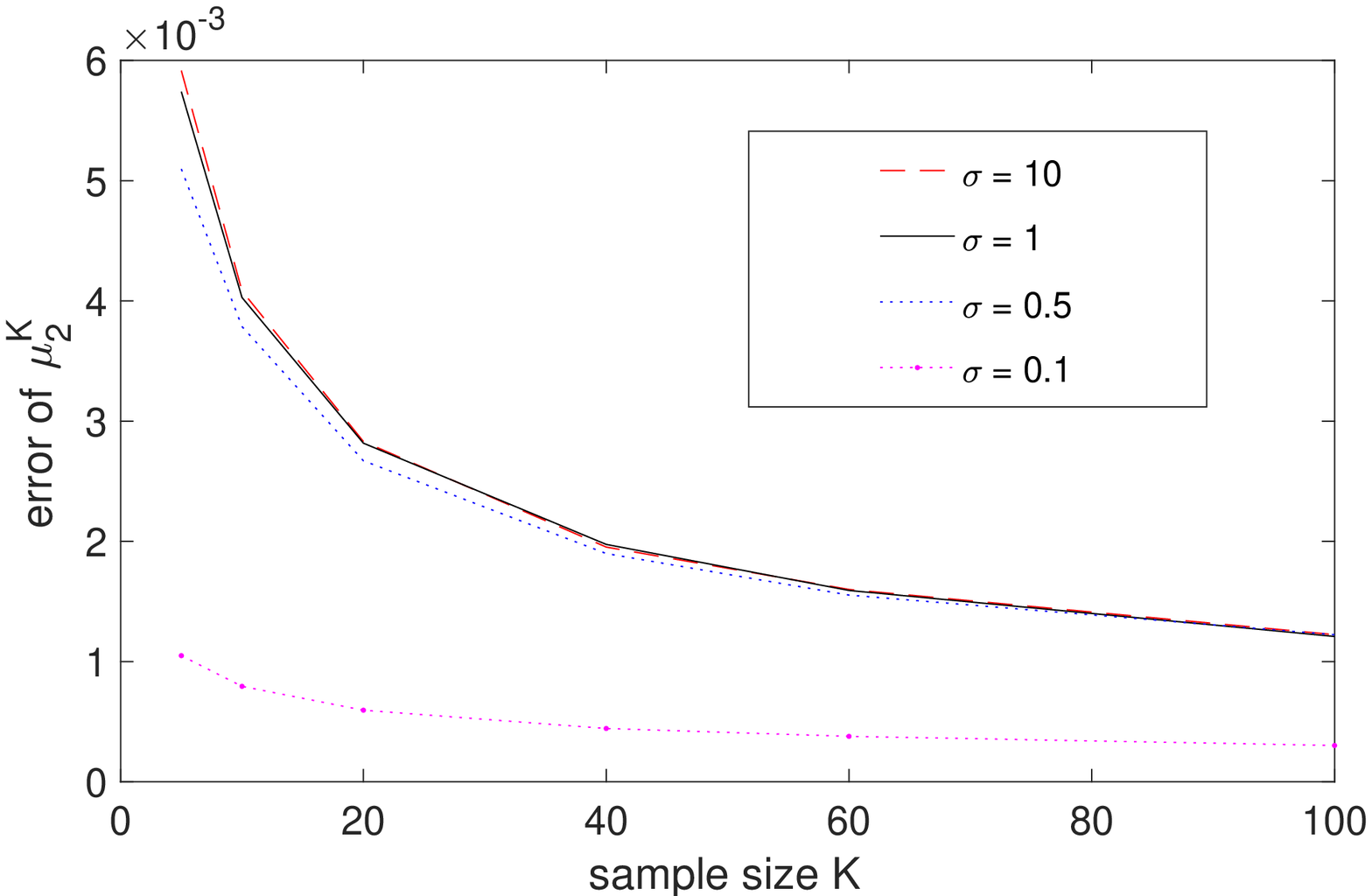}
\caption{error of $\mu_2^K$}
\label{fig:side:b}
\end{minipage}
\end{figure}

{\small


}

\end{document}